\definecolor{red}{rgb}{1,0,0}
\definecolor{blue}{rgb}{0,0,.9}
\definecolor{green}{rgb}{0,.6,0}
\definecolor{purp}{rgb}{.5,0,.5}
\definecolor{grey}{rgb}{.3,.3,.3}
\numberwithin{figure}{section}
\numberwithin{table}{section}   
\numberwithin{equation}{section}   
\tikzstyle{vertex}=[circle, draw=black, thick, inner sep=0pt, minimum size=6pt]
\tikzstyle{Bvertex}=[circle, black, fill, draw, inner sep=0pt, minimum size=6pt]
\tikzstyle{vtx}=[circle, white, fill, draw=black, thick, inner sep=0pt, minimum size=6pt]
\tikzstyle{gvertex}=[circle, green, fill, draw=black, inner sep=0pt, minimum size=6pt]
\newtheorem{thm}{Theorem}[section]
\newtheorem{cor}[thm]{Corollary}
\newtheorem{lem}[thm]{Lemma}
\newtheorem{prop}[thm]{Proposition}
\theoremstyle{definition}
\newtheorem{rem}[thm]{Remark}
\theoremstyle{definition}
\theoremstyle{definition}
\newtheorem{ex}[thm]{Example}
\newcommand{\Z}{\operatorname{Z}}
\newcommand{\Zp}{\operatorname{Z}_+}
\newcommand{\pd}{\gamma_P}
\newcommand{\ptz}{\operatorname{pt}_{\Z}}
\newcommand{\ptx}{\operatorname{pt}_Y}
\newcommand{\ptp}{\operatorname{pt}_+}
\newcommand{\ppt}{\operatorname{pt}_{\gamma_P}}
\newcommand{\thx}{\operatorname{th}_Y}
\newcommand{\thpd}{\operatorname{th}_{\gamma_P}}
\newcommand{\thxx}{\operatorname{th}_Y^\times}
\newcommand{\thxa}{\operatorname{th}_Y^\ast}
\newcommand{\thzx}{\operatorname{th}_{\Z}^\times}
\newcommand{\thza}{\operatorname{th}_{\Z}^\ast}
\newcommand{\thpx}{\operatorname{th}_+^\times}
\newcommand{\thpa}{\operatorname{th}_+^\ast}
\newcommand{\thpda}{\operatorname{th}_{\gamma_P}^\ast}
\newcommand{\thpdx}{\operatorname{th}_{\gamma_P}^\times}
\newcommand{\epn}{\operatorname{epn}}
\newcommand{\x}{\times}
\newcommand{\bit}{\begin{itemize}}
\newcommand{\eit}{\end{itemize}}
\newcommand{\ben}{\begin{enumerate}}
\newcommand{\een}{\end{enumerate}}
\newcommand{\beq}{\begin{equation}}
\newcommand{\eeq}{\end{equation}}
\newcommand{\bea}{\begin{eqnarray*}} 
\newcommand{\eea}{\end{eqnarray*}}
\newcommand{\bpf}{\begin{proof}}
\newcommand{\epf}{\end{proof}\ms}
\newcommand{\bmt}{\begin{bmatrix}}
\newcommand{\emt}{\end{bmatrix}}
\newcommand{\ms}{\medskip}
\newcommand{\cp}{\, \Box\,}
\newcommand{\lc}{\left\lceil}
\newcommand{\rc}{\right\rceil}
\newcommand{\lf}{\left\lfloor}
\newcommand{\rf}{\right\rfloor}
\newcommand{\noi}{\noindent}
\newcommand{\ol}{\overline}
\title{Sharp bounds for product and sum throttling numbers}
\author{Ryan Blair\thanks{Department of Mathematics and Statistics, California State University Long Beach, Long Beach, CA  90840, USA (ryan.blair@csulb.edu)}\and Gabriel Elvin\thanks{Department of Mathematics, California State University, San Bernardino, San Bernardino, CA 92407, USA (Gabriel.Elvin@csusb.edu)}\and Veronika Furst\thanks{Department of Mathematics, Fort Lewis College, Durango, CO  81301, USA (furst\_v@fortlewis.edu)}\and Leslie Hogben\thanks{American Institute of Mathematics, Pasadena, CA 91125, USA (hogben@aimath.org); Department of Mathematics, Iowa State University, Ames, IA 50011, USA; Department of Mathematics, Purdue University, West Lafayette, IN 47906, USA.}\and Nandita Sahajpal \thanks{Department of Data, Media and Design, Nevada State University, Henderson, NV 89002, USA (nandita.sahajpal@nevadastate.edu)}\and Tony W. H. Wong \thanks{Department of Mathematics, Kutztown University of Pennsylvania, Kutztown, PA 19530, USA (wong@kutztown.edu)}}
\begin{document}

\maketitle

\begin{abstract} 
Throttling in graphs optimizes a sum or product of resources used, such as the number of vertices in an initial set, and time required, such as the propagation time, to complete a given task.  
 We introduce a new technique to establish sharp upper bounds in terms of graph order  for  sum throttling and initial cost product throttling for power domination.
Furthermore,  we establish sharp bounds on possible changes of the product throttling number, both with and without initial cost, caused by certain graph operations for standard zero forcing, positive semidefinite forcing, and power domination.
\end{abstract}

\noi {\bf Keywords} Throttling, product throttling, zero forcing, power domination

\noi{\bf AMS subject classification} 05C57, 05C69, 68R10

\section{Introduction}

Throttling in graphs optimizes a sum or product of resources used and time required to complete a task.  In the case of several graph processes that can be viewed as coloring games on the vertices of the graph, we seek to minimize the combination of the number of vertices in the initial set used to start the process and the propagation time.  The study of (sum) throttling was initiated by Butler and Young in \cite{BY13}.  Two forms of product throttling, which optimizes the product of the resources and time, were introduced in \cite{cop-throttle2} for Cops and Robbers and in \cite{product1} for power domination. Anderson et al. studied both types of product throttling from a universal perspective and for the parameters standard zero forcing, positive semidefinite (PSD) forcing, power domination, and Cops and Robbers in \cite{product2}.  
In Theorem \ref{thm:6/7} we introduce a new technique to establish a sharp upper bound in terms of graph order for initial cost product throttling for power domination,  answering Question 2.66 in \cite{product2} in the affirmative.
In Theorem \ref{thm:limsuptake2} we apply this technique to establish a sharp upper bound in terms of graph order for the (sum) throttling number for power domination, thereby resolving the issue of the highest possible value of $\thpd(G)$ for a connected graph $G$ of order $n$ that was discussed in \cite[page 249]{HLSbook}.
In Section \ref{s:graoh-ops} we establish sharp bounds on possible changes caused by certain graph operations in both types (with and without initial cost) of product throttling number for standard zero forcing, PSD forcing, and power domination.
The remainder of this introduction contains some basic graph notation, precise definitions of sum and product throttling, the parameters studied,  and elementary results that will be used.

Given a (simple, undirected) graph $G$, let $V(G)$ denote the set of vertices and $E(G)$ denote the set of edges; $|V(G)|$ is the \emph{order} of $G$  and an edge between vertices $u$ and $v$ is denoted by $uv$ or $vu$. Given $v\in V(G)$, the (open) neighborhood of $v$ in $G$ is $N_G(v)=\{u: \ uv\in E(G)\}$ and the closed neighborhood of $v$ in $G$ is $N_G[v]=N(v)\cup \{v\}$.  The degree of a vertex $v$ is $\deg_G(v) = |N_G(v)|$.
A vertex $\ell$ of a graph $G$ is    a \emph{leaf} if $\deg_G(\ell)=1$.   A vertex $u$ of a graph $G$ is   \emph{universal} if $u$ is adjacent to every other vertex of $G$. 
Given a set $X\subseteq V(G)$,  the \emph{induced subgraph}  $G[X]$ has $V(G[X])=X$ and $E(G[X])=\{uv: uv\in E(G)\mbox{ and }u,v\in X\}$.  

A \emph{path} in a graph $G$ is a sequence of distinct vertices $v_1,v_2,\dots,v_r$ such that for each $i$ with $1 \leq i \leq r-1$ we have $v_iv_{i+1} \in E(G)$.   
  A graph $G$ is  \emph{connected} if for each pair of vertices $u,w \in V(G)$ there exists a path $v_1,v_2,\dots,v_r$ with $v_1=u$ and $v_r=w$; a graph is \emph{disconnected} if it is not connected.   The \emph{components} of $G$ are its maximal connected subgraphs.  Also, the \emph{path} $P_n$ is the graph with $V(P_n)=\{v_1,\dots,v_n\}$ and $E(P_n)=\{{ v_iv_{i+1}}: i=1,\dots,n-1\}$. 
The complete graph of order $n$, which has an edge between every pair of vertices,  is denoted by $K_n$.
 For a graph $H$ and positive integer $r$,  the graph $H\circ rK_1$ (known as the \emph{corona} of $H$ with $rK_1$) is constructed from a graph $H$ by adding $r$ leaves to each vertex of $H$. The graph $K_1\circ (n-1)K_1$ is the \emph{star} of order $n$; it is the complete bipartite graph on sets of sizes $1$ and $n-1$ and is more commonly denoted by $K_{1,n-1}$. A \emph{spider}  is a tree that has exactly one vertex of degree at least three, called its \emph{center}.\footnote{In the literature, a spider is also called a \emph{generalized star}, and sometimes is not required to have a vertex of degree three or more.} The  spider of order $a_1+ \dots+a_k+1$ with the center adjacent to $k\ge 3$ paths
of orders  $a_1\ge \dots\ge a_k$ is denoted by $S(a_1,\dots,a_k)$; these paths are called the \emph{legs}.

\subsection{Parameters}

In this section we provide precise definitions of the parameters discussed throughout.
Standard zero forcing, positive semidefinite zero forcing, and power domination can all be thought of as  coloring games on a graph, where the goal is to fill (color) all the vertices  (starting with  each vertex filled or unfilled); unfilled vertices are filled by applying a {color change rule}.\footnote{Various terms have been used including blue and white vertices, but filled/unfilled has recently become standard due to its suitability for all media.} 
Standard and positive semidefinite zero forcing originated in combinatorial matrix theory, providing upper bounds on the nullity of certain sets of symmetric matrices whose off-diagonal pattern of nonzero entries is described by the given graph; 
standard zero forcing was also introduced in control of quantum systems. Power domination originated from the problem of optimal placement of monitoring units in an electrical network. More information on the origins of these parameters and relevant references can be found in \cite{HLSbook}.

Standard zero forcing uses the \emph{standard color change rule}:
\bit
\item[] If $w$ is the unique unfilled neighbor of a filled vertex $v$, then  fill  $w$.  
\eit
Positive semidefinite (PSD) zero forcing uses the \emph{PSD color change rule}:
\bit
\item[] Let $B$ be the set of (currently) filled vertices and let $W_1,\dots, W_k$ be the sets of vertices of the  components of $G[V(G)\setminus B]$.  If $v\in B$, $w\in W_i$, and $w$ is the only unfilled neighbor of  $v$ in $G[W_i\cup B]$, then fill $w$. 
\eit
Note that it is possible that there is only one component of $G[V(G)\setminus B]$, and in that case the effect of the PSD color change rule is the same as that of the standard color change rule.   
 Forcing using the PSD color change rule is also called \emph{PSD forcing}.
 Repeated application of the standard or PSD color change rule until no more vertices can be filled is called the \emph{standard zero forcing propagation process} or \emph{PSD forcing propagation process}.

 \emph{Power domination} begins with a \emph{domination step}, in which every neighbor of a filled vertex is filled.  After that first step, the standard color change rule is applied. This is called the \emph{power domination propagation process}.  Power domination has natural connections to domination, so we provide notation for that also: A set $S\subseteq V(G)$ is a \emph{dominating set of $G$} if every vertex of $G$ is in $S$ or is a neighbor of a vertex in $S$.  The \emph{domination number} is 
 $\gamma(G)=\min\{|S|: S \mbox{ is a dominating set of } G \}$.  Note that a set consisting of a universal vertex is a dominating set of size  one. 
 
A set $B$ of vertices is called a standard zero forcing set, PSD forcing set, or power dominating set, respectively, if starting with the vertices in $B$ filled and the remaining vertices unfilled, the respective propagation process can fill all the vertices.  The \emph{standard zero forcing number} $\Z(G)$ is the minimum cardinality of a standard zero forcing set, and similarly for the \emph{PSD forcing number} $\Zp(G)$ and \emph{power domination number} $\pd(G)$.
We will sometimes use $Y$ to denote one or more of the parameters standard zero forcing,  PSD forcing, or power domination and the $Y$-number  $Y(G)$ of a graph $G$; a \emph{$Y$-set}, where $Y$ is one of these parameters, is an initial  set of  filled vertices that can fill all the other vertices under the the propagation process for $Y$ (called a $Y$-propagation process), and a \emph{minimum $Y$-set} is a $Y$-set of minimum cardinality.

\subsection{Propagation time and throttling}

Throughout this section $Y$ represents one the parameters standard zero forcing, PSD forcing, or power domination.
Starting with a set $B\subseteq V(G)$, 
we define two sequences of sets, the sets $B^{(i)}$ of vertices that are filled during time step $i$   and  the sets $B^{[i]}$ of vertices that are filled after time step $i$. Thus $B^{[0]}=B^{(0)}=B$ is the initial  set of filled vertices and $B^{[i+1]}=B^{[i]}\cup B^{(i+1)}$. 
Assume $B^{(i)}$ and $B^{[i]}$ have been constructed. Then 
\[B^{(i+1)}=\{w: 
\mbox{ $w$ can be  filled (given that all vertices in $B^{[i]}$  are filled)}\}.\] 
 The   \emph{$Y$-propagation time} of $B\subseteq V(G)$, denoted by $\ptx(G,B)$, is the least $t$ such that $B^{[t]}=V(G)$; if $B^{[t]}\ne V(G)$ for all $t$, then $\ptx(G,B)=\infty$. 
 Define  the \emph{$k$-propagation time of $G$} for $Y$ by
\[ \ptx(G,k)=\min_{|B|= k}\ptx(G,B).\]  The \emph{$Y$-propagation time of  $G$}  is 
$\ptx(G)=\ptx(G,Y(G)).$

We are now ready to list the various parameters related to throttling.  We define the (sum or product) throttling number of a set, the $k$-throttling number using exactly $k$ initially filled vertices, and the throttling number of the graph, minimizing over all acceptable choices of $k$.  Let $G$ be a graph of order $n$. 

\emph{Sum throttling\footnote{What we call the sum throttling number was introduced first and is often just called  the throttling number in the literature.} for $Y$}: For a set $B\subseteq V(G)$ and $1\le k\le n$, 
\[\thx(G,B)=|B|+\ptx(G,B), \ \thx(G,k)=\min_{|B|=k}\thx(G,B),\mbox{ 
and }\thx(G)=\min_{1\le k\le n}\thx(G,k).\]

There are two kinds of product throttling, with and without initial cost. 
 These two kinds can be thought of as dealing with applications in which there is a cost to getting resources into position before the propagation process begins and those that do not have such a cost. 

\emph{Initial cost product throttling for $Y$:}   For  a graph $G$ of order $n$, a set $B\subseteq V(G)$, and $1\le k\le n$, 
\[\thxx(G,B)=|B|(1+\ptx(G,B)), \ \thxx(G,k)=\min_{|B|=k}\thxx(G,B),\mbox{ 
and }\thxx(G)=\min_{1\le k\le n}\thxx(G,k).\]

\emph{No initial cost product throttling for $Y$:}   For   a graph $G$ of order $n$ that  has an edge, a set $B\subseteq V(G)$, and $1\le k\le n-1$, 
\[\thxa(G,B)=|B|\ptx(G,B), \ \thxa(G,k)=\min_{|B|=k}\thxa(G,B),\mbox{ 
and }\thxa(G)=\min_{1\le k\le n-1}\thxa(G,k).\]

The results in the next remark have appeared in \cite[Section 11.1]{HLSbook} and \cite{product2}.

\begin{rem}\label{bds-u}
  Let $G$ be a graph of of order $n$ that has an edge. Since $1\le Y(G)<n$  for the parameters $Y=\Z, \Zp, \pd$, we have  $2\le Y(G)+1\le \thxx(G), \thx(G)\le n$.   
   Similarly, $1\le \thxa(G)\le n-1$. 
\end{rem}


\section{New upper bounds for power domination throttling}\label{s:limsup}

  In this section we establish  new, sharp  upper bounds on the power domination  initial cost product and sum throttling numbers for all connected graphs. The product throttling bound answers  Question 2.66 in \cite{product2}. 
 We make use of the next bound on the domination number, which is well-known.  
  

\begin{thm}\label{dom-bd}{\rm  \cite{Ore}, \cite[Theorem 4.21]{core-graph-dom-book}}
If $G$ is a graph of order $n$ with no isolated vertices, then 
    $\gamma(G)\le \frac n 2$.
\end{thm}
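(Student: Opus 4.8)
The plan is to prove the stronger structural fact that underlies Ore's bound: in a graph with no isolated vertices, the vertex complement of any inclusion-minimal dominating set is again a dominating set. Once this is in hand, the bound is immediate, since a dominating set and its complement partition the $n$ vertices, so one of them has at most $n/2$ vertices.

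First I would fix an \emph{inclusion-minimal} dominating set $D$ of $G$ (not necessarily a minimum one): such a set exists because $V(G)$ is dominating and one may delete vertices one at a time, preserving the dominating property, until no further deletion is possible. The heart of the argument is then the claim that $V(G)\setminus D$ is also a dominating set. To check this, I take an arbitrary $v\in V(G)$ and show it is dominated by $V(G)\setminus D$. If $v\notin D$, then $v$ lies in $V(G)\setminus D$ and is dominated trivially. If $v\in D$, then minimality of $D$ guarantees that $D\setminus\{v\}$ fails to dominate some vertex $u$, and I would split into the cases $u=v$ and $u\ne v$. When $u=v$, the vertex $v$ has no neighbor in $D\setminus\{v\}$, hence no neighbor in $D$ at all; since $v$ is not isolated, $v$ must have a neighbor in $V(G)\setminus D$. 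When $u\ne v$, the failure of $D\setminus\{v\}$ to dominate $u$ forces $u\notin D$ and $u$ to have no neighbor in $D\setminus\{v\}$; but $D$ dominates $u$, so $v$ is the unique neighbor of $u$ in $D$, and thus $v$ has the neighbor $u\in V(G)\setminus D$. In both cases $v$ has a neighbor in $V(G)\setminus D$, which establishes the claim.

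With the claim proved, both $D$ and $V(G)\setminus D$ are dominating sets of $G$, so
\[
\gamma(G)\le \min\{|D|,\,n-|D|\}\le \frac n2,
\]
as desired.

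The main obstacle is the complementation claim, and specifically the case analysis driven by the minimality of $D$. This is also exactly where the hypothesis of no isolated vertices is essential: if $v$ were isolated and placed in $D$, it could be dominated neither by itself from $V(G)\setminus D$ nor by any neighbor, so the complement would fail to dominate and the bound would break (indeed $\gamma$ counts every isolated vertex). I would therefore take care to isolate the use of this hypothesis in the subcase $u=v$, where non-isolation of $v$ is invoked to produce a neighbor outside $D$.
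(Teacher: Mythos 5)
Your proof is correct: the complementation claim --- that the complement of an inclusion-minimal dominating set is again dominating when $G$ has no isolated vertices --- is verified soundly in both cases ($u=v$ and $u\ne v$), and the no-isolated-vertices hypothesis is invoked exactly where it is needed. The paper itself offers no proof, citing this classical bound of Ore, and your argument is precisely the standard proof found in the cited references, so your approach matches the intended one.
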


 To establish the main theorems of this section, we need some  additional definitions. 
      A set $D$ is an \emph{edge-maximum minimum dominating set} of $G$ if $D$ is a minimum dominating set and  $G[D]$ has the maximum number of edges over all minimum dominating sets $D$ of $G$. 
  For a graph $G$,  a dominating set $D$ of $G$, and $v\in D$, a vertex $w$ is an \emph{external private neighbor} of $v$ relative to $D$ if  $w\not\in D$, $w\in N_G(v)$,  
and  $w\not\in N_G(z)$ for all $z\in D\setminus\{v\}.$  The set of external private neighbors of $v$ relative to $D$ is denoted by $\epn[v,D]$.
  Proposition 6 of \cite{BC79} states that if $G$ has no isolated vertices, there exists a minimum dominating set $D$ such that every vertex of $D$ has an external private neighbor (although the notation is different). The next lemma assumes connectedness and includes the hypothesis used to find such a $D$ in the proof of Proposition 6 of \cite{BC79}.  We include its  brief proof here for completeness and to highlight the subtle difference between this lemma and Lemma \ref{thm:maximin}.

\begin{lem}\label{thm:edge-maximum}
  Let $D$ be an edge-maximum minimum dominating set of a connected graph $G$ of order at least two. Then $\epn[v,D]\neq\emptyset$ for every vertex $v\in D$.
\end{lem}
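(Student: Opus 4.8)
The plan is to argue by contradiction. Suppose some $v\in D$ satisfies $\epn[v,D]=\emptyset$; that is, every neighbor of $v$ lying outside $D$ is also adjacent to some vertex of $D\setminus\{v\}$. I would split the argument into two cases according to whether $v$ has a neighbor inside $D$, and derive a contradiction in each. The first case will contradict minimality of $D$, and the second will contradict its edge-maximality, so that the full strength of the ``edge-maximum minimum dominating set'' hypothesis gets used.

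First I would handle the case in which $v$ has a neighbor in $D$. Here I claim $D\setminus\{v\}$ is already a dominating set, contradicting the minimality of $D$. Indeed, $v$ itself is dominated by its neighbor in $D\setminus\{v\}$; vertices of $D\setminus\{v\}$ dominate themselves; and any vertex outside $D$ was dominated by $D$, so if its only dominator in $D$ were $v$ it would be an external private neighbor of $v$, contradicting $\epn[v,D]=\emptyset$, whence it is still dominated by $D\setminus\{v\}$. This verification uses only $\epn[v,D]=\emptyset$ together with the existence of a neighbor of $v$ in $D$.

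The more delicate case is when $v$ is isolated in $G[D]$. Since $G$ is connected of order at least two, $v$ has a neighbor $u$, which must lie outside $D$, and by the contradiction hypothesis $u$ is adjacent to some $z\in D\setminus\{v\}$. I would then perform the swap $D'=(D\setminus\{v\})\cup\{u\}$ and show $D'$ is again a minimum dominating set with strictly more induced edges. The set $D'$ has the same cardinality as $D$. It dominates every vertex: every vertex other than $v$ that was dominated by $D$ is still dominated by $D\setminus\{v\}\subseteq D'$, since the absence of external private neighbors means no vertex outside $D$ relied solely on $v$; and $v$ itself is now dominated by $u\in D'$. For the edge count, the isolation of $v$ in $G[D]$ gives $|E(G[D])|=|E(G[D\setminus\{v\}])|$, whereas $G[D']$ contains all of those edges together with the edge $uz$, so $|E(G[D'])|\ge|E(G[D])|+1$, contradicting edge-maximality.

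The main obstacle is the bookkeeping in this second case: one must simultaneously check that the swap preserves domination (which is exactly where $\epn[v,D]=\emptyset$ is used, to guarantee no vertex loses its unique dominator when $v$ is deleted) and that it strictly increases the induced edge count (which is where the isolation of $v$ in $G[D]$ and the neighbor $z\in D$ of $u$ are used). Extracting both conclusions from the single vertex swap is the crux; by comparison the first case is immediate.
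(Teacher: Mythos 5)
Your proof is correct and takes essentially the same approach as the paper's: both argue by contradiction, rule out a neighbor of $v$ in $D$ via minimality, and then swap the (necessarily isolated-in-$G[D]$) vertex $v$ for an external neighbor $u$, which by $\epn[v,D]=\emptyset$ has a second neighbor $z\in D\setminus\{v\}$, so the swap preserves minimum domination while strictly increasing the induced edge count. Your write-up simply makes explicit the domination bookkeeping that the paper leaves implicit.
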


\begin{proof}
    Suppose to the contrary
     that there exists a vertex $v\in D$ with $\epn[v,D] = \emptyset$.  If $v$ were adjacent to another vertex in $D$, then $D\setminus\{v\}$ would be a smaller dominating set of $G$, so $v$ is an isolated vertex in $G[D]$.  Since $G$ is connected, $v$ is adjacent to some vertex $u\in V(G)\setminus D$.  Now, $D^* = (D\setminus\{v\}) \cup \{u\}$ is a minimum dominating set of $G$, and $|E(G[D^*])| > |E(G[D])|$ since $u$ has at least one neighbor in $D$ other than $v$.
\end{proof}

Given $X\subseteq V(G)$, define $\Sigma(X)=\sum_{x\in X}\deg_G(x)$. 
 We define a dominating set $D$  to be \emph{optimal} if it is an  edge-maximum minimum dominating set that satisfies
\[
\Sigma(D) = \max\{\Sigma(D') : D' \text{ is an edge-maximum minimum dominating set}\}.
\]

\begin{lem}\label{thm:maximin}
Let $D$ be an optimal dominating set of a connected graph $G$   of order at least  three. For each $v\in D$, choose $u_v\in \epn[v,D]$ and define $A=\{u_v: \ v\in D\}$. Then $G[V(G)\setminus A]$ contains no isolated vertices. 
\end{lem}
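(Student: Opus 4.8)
The plan is to characterize exactly which vertices could possibly be isolated in $G[V(G)\setminus A]$ and then rule them out using the three-fold optimality of $D$. First I would record two elementary facts about $A$: the external private neighbors $u_v$ are pairwise distinct (each $u_v$ is adjacent in $D$ only to $v$), so $|A|=|D|$, and $A\cap D=\emptyset$. Consequently, any vertex $w\in V(G)\setminus(D\cup A)$ is dominated by some $z\in D$, and $z\in V(G)\setminus A$, so $w$ has a neighbor in $V(G)\setminus A$ and cannot be isolated. This reduces the problem to showing that no vertex of $D$ is isolated in $G[V(G)\setminus A]$.

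Next I would pin down which neighbors of a vertex $v\in D$ lie in $A$. By the definition of external private neighbor, $u_{v'}$ is adjacent to no vertex of $D\setminus\{v'\}$; hence $v$ is adjacent to no $u_{v'}$ with $v'\neq v$, so the only neighbor of $v$ in $A$ is $u_v$ itself (and $u_v\in N_G(v)$). Therefore $v$ is isolated in $G[V(G)\setminus A]$ if and only if $N_G(v)=\{u_v\}$, i.e.\ $v$ is a leaf whose unique neighbor is $u_v$. It thus suffices to derive a contradiction from the existence of such a leaf $v\in D$.

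Assuming such a $v$ exists, I would analyze the swap $D^*=(D\setminus\{v\})\cup\{u_v\}$. First I would check that $D^*$ is again a minimum dominating set: the leaf $v$ dominates only $N_G[v]=\{v,u_v\}$, and both of these vertices, along with everything previously dominated by $D\setminus\{v\}$, are dominated by $D^*$; since $|D^*|=|D|$, it is minimum. The key comparison is on edges: because $v$ is a leaf with its neighbor outside $D$, $v$ is isolated in $G[D]$, and because $u_v$ is a private neighbor it has no neighbor in $D\setminus\{v\}$, so $u_v$ is isolated in $G[D^*]$; hence $|E(G[D^*])|=|E(G[D])|$ and $D^*$ is also edge-maximum. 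Finally, since $G$ is connected of order at least three, $u_v$ cannot itself be a leaf, so $\deg_G(u_v)\ge 2>1=\deg_G(v)$, giving $\Sigma(D^*)=\Sigma(D)-\deg_G(v)+\deg_G(u_v)>\Sigma(D)$, which contradicts the optimality of $D$.

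The step I expect to be the main obstacle is the edge comparison: to invoke the $\Sigma$-maximality one must first guarantee that the swapped set $D^*$ is still edge-maximum, and the clean way to see this is the observation that the external private neighbor condition forces $u_v$ to be isolated in $G[D^*]$, exactly matching the isolation of the leaf $v$ in $G[D]$. Everything else is bookkeeping; the role of the layered optimality (minimum, then edge-maximum, then degree-sum-maximum) is precisely to make this leaf-swap both admissible and strictly improving, so that a single forbidden configuration contradicts the top layer.
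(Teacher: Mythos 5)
Your proof is correct and takes essentially the same route as the paper: both reduce the problem to an isolated vertex $v\in D$ with $N_G(v)=\{u_v\}$ and derive a contradiction to the $\Sigma$-maximality via the swap $D^*=(D\setminus\{v\})\cup\{u_v\}$, using connectivity and $|V(G)|\ge 3$ to get $\deg_G(u_v)\ge 2 > \deg_G(v)$. Your explicit verification that $u_v$ is isolated in $G[D^*]$ (so that $|E(G[D^*])|=|E(G[D])|$ and $D^*$ remains edge-maximum) spells out a step the paper leaves implicit, but it is the same argument.
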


\begin{proof}
    Suppose to the contrary that $G[V(G)\setminus A]$ has an isolated vertex $x$.
     Then $x \in D$ because otherwise it would not be isolated, since $D$ is a dominating set. 
    By the definition of $A$, $u_x$ is the unique neighbor of $x$ in $A$, so  $N_{G}(x)=\{u_x\}$. Since $G$ is connected and $|V(G)|\ge 3$, $u_x$ has degree at least two. Note that $D^*=(D\setminus\{x\})\cup \{u_x\}$ is a minimum dominating set of $G$.  Furthermore, $|E(G[D^*])| = |E(G[D])|$ since $x$ is isolated in $G[D]$. Since the degree of $u_x$ is strictly greater than the degree of $x$, we have $\Sigma(D^*)>\Sigma(D)$, contradicting the assumption that $D$ is an optimal dominating set. Thus, $G[V(G)\setminus A]$ has no isolated vertices.  
\end{proof}

Now we consider initial cost product throttling for power domination.  The next example slightly generalizes Example 2.65 in \cite{product2}.

\begin{ex}\label{ex:6n/7}
Let $H$ be a connected graph on vertices $v_1,v_2,\dots,v_{2k}$ let $G$ be given by
\[V(G)=V(H)\cup\{u_{i,1},u_{i,2}: \ 1\leq i\leq2k\}\cup\{u_{i,3}: \ 1\leq i\leq k\}\]
and 
\[E(G)=E(H)\cup\{\{v_i,u_{i,1}\},\{v_i,u_{i,2}\}: \ 1\leq i\leq2k\}\cup\{\{u_{i,2},u_{i,3}\}: \ 1\leq i\leq k\},\]
as shown in Figure~\ref{fig:67-ex}. 
\begin{figure}[H]
\centering
\begin{tikzpicture}
\draw[fill=gray!20](0,0) ellipse (3cm and 2cm);\node at(1.9,-1.2){$H$};
\draw(1,-1.4)--(2.4,-0.55)--(-2.4,0.55)--(1,-1.4);\draw(-1,1.4)--(1,-1.4)--(2.4,0.55)--(-2.4,0.55)--(-2.4,-0.55);\draw(-1,-1.4)--(-2.4,-0.55)--(1,1.4)--(-1,1.4)--(2.4,0.55);
\draw(3.9,0.5)--(2.4,0.55)--(3.5,1.5);
\draw(1.8,2.5)--(1,1.4)--(0.8,2.7);
\draw(-1.8,2.5)--(-1,1.4)--(-0.8,2.7);
\draw(-3.9,0.5)--(-2.4,0.55)--(-3.5,1.5);
\draw(3.9,-0.5)--(2.4,-0.55)--(3.5,-1.5);
\draw(1.8,-2.5)--(1,-1.4)--(0.8,-2.7);
\draw(-1.8,-2.5)--(-1,-1.4)--(-0.8,-2.7);
\draw(-3.9,-0.5)--(-2.4,-0.55)--(-3.5,-1.5);
\draw(-3.5,1.5)--(-4.6,2.45);\draw(-0.8,2.7)--(-0.6,4);\draw(1.8,2.5)--(2.6,3.6);\draw(3.9,0.5)--(5.4,0.45);
\draw[fill=gray!20](2.4,0.55)circle(0.1);\node[above]at(2.4,0.55){$v_1$};
\draw[fill=white](3.9,0.5)circle(0.1);
\draw[fill=white](3.5,1.5)circle(0.1);

\draw[fill=gray!20](1,1.4)circle(0.1);\node[right]at(1,1.4){$v_2$};
\draw[fill=white](1.8,2.5)circle(0.1);
\draw[fill=white](0.8,2.7)circle(0.1);

\draw[fill=gray!20](-1,1.4)circle(0.1);\node[left]at(-1,1.4){$v_3$};
\draw[fill=white](-1.8,2.5)circle(0.1);
\draw[fill=white](-0.8,2.7)circle(0.1);

\draw[fill=gray!20](-2.4,0.55)circle(0.1);\node[above]at(-2.4,0.55){$v_4$};
\draw[fill=white](-3.9,0.5)circle(0.1);
\draw[fill=white](-3.5,1.5)circle(0.1);

\draw[fill=gray!20](2.4,-0.55)circle(0.1);\node[above]at(2.4,-0.55){$v_8$};
\draw[fill=white](3.9,-0.5)circle(0.1);
\draw[fill=white](3.5,-1.5)circle(0.1);

\draw[fill=gray!20](1,-1.4)circle(0.1);\node[left]at(1,-1.4){$v_7$};
\draw[fill=white](1.8,-2.5)circle(0.1);
\draw[fill=white](0.8,-2.7)circle(0.1);

\draw[fill=gray!20](-1,-1.4)circle(0.1);\node[right]at(-1,-1.4){$v_6$};
\draw[fill=white](-1.8,-2.5)circle(0.1);
\draw[fill=white](-0.8,-2.7)circle(0.1);

\draw[fill=gray!20](-2.4,-0.55)circle(0.1);\node[below]at(-2.4,-0.55){$v_5$};
\draw[fill=white](-3.9,-0.5)circle(0.1);
\draw[fill=white](-3.5,-1.5)circle(0.1);

\draw[fill=white](-4.6,2.45)circle(0.1);
\draw[fill=white](-0.6,4)circle(0.1);
\draw[fill=white](2.6,3.6)circle(0.1);
\draw[fill=white](5.4,0.45)circle(0.1);
\end{tikzpicture}
    \caption{A graph $G$ that achieves the 
    upper bound in Theorem \ref{thm:6/7}. 
  }  \label{fig:67-ex}
\end{figure}

 Any power dominating set $P$ of $G$ must contain at least one vertex from $\{v_i,u_{i,1},u_{i,2},u_{i,3}\}$ for each $1\leq i\leq k$ and one vertex from $\{v_i,u_{i,1},u_{i,2}\}$ for each $k+1\leq i\leq2k$, so $|P|\geq2k$. If $|P|<3k$, then there exists $1\leq i\leq k$ such that $P$ contains only one vertex from $\{v_i,u_{i,1},u_{i,2},u_{i,3}\}$, implying that $\ppt(G,P)\geq2$, so $\thpdx(G,P)=|P|(1+\ppt(G,P))\geq2k(1+2)=6k$. If $|P|\geq3k$, then $\thpdx(G,P)\geq3k(1+1)=6k$. Therefore, $\thpdx(G)\geq6k$. Noting that the order of $G$ is $7k$ and $\thpdx(G,V(H))=6k$, we have $\thpdx(G)=\frac{6}{7}|V(G)|$.
\end{ex}

It was asked in Question 2.66 in \cite{product2} if $\thpdx(G)\le\frac{6n}{7}$ for  connected graphs of arbitrarily large order $n$. The next result answers this question  in the  affirmative. 
Note that the result is not true for $n=1,2$ since a graph $G$ of order $n=1,2$ has $\thpdx(G)=n$, nor is it true without the assumption of connectivity, since $\thpdx(\ol{K_n})=n$.

\begin{thm}\label{thm:6/7}
Let $G$ be  a connected graph of order $n\ge 3$. Then $\thpdx(G)\le\frac{6n}{7}$  and this bound is sharp.
\end{thm}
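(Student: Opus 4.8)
The sharpness is already in hand: Example~\ref{ex:6n/7} produces, for every $k$, a connected graph of order $7k$ with $\thpdx(G)=6k=\frac{6n}{7}$, so only the upper bound needs an argument. The plan is to construct \emph{two} different power dominating sets of $G$, one whose size is governed by $\gamma(G)$ from below and one by $n-\gamma(G)$, and then balance the two resulting estimates against each other. Throughout I set $\gamma=\gamma(G)$; since $G$ is connected of order $n\ge 3$ it has no isolated vertices, so Theorem~\ref{dom-bd} gives $\gamma\le n/2$.

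The first (easy) estimate is to use a minimum dominating set $D$ itself as the initial set. The power domination process begins with a domination step, which fills every neighbor of $D$ and hence all of $V(G)$; thus $\ppt(G,D)\le 1$, giving $\thpdx(G)\le |D|(1+\ppt(G,D))\le 2\gamma$.

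The second estimate is where Lemmas~\ref{thm:edge-maximum} and~\ref{thm:maximin} do the work. I would take an \emph{optimal} dominating set $D$, pick $u_v\in\epn[v,D]$ for each $v\in D$ (nonempty by Lemma~\ref{thm:edge-maximum}), and set $A=\{u_v:v\in D\}$. Since external private neighbors of distinct vertices are themselves distinct, $|A|=|D|=\gamma$, and by Lemma~\ref{thm:maximin} the graph $G'=G[V(G)\setminus A]$ has no isolated vertices. Applying Theorem~\ref{dom-bd} to $G'$, which has order $n-\gamma$, produces a dominating set $D'$ of $G'$ with $|D'|\le (n-\gamma)/2$. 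I then claim $D'$ is a power dominating set of $G$ with $\ppt(G,D')\le 2$: the domination step fills $N_G[D']\supseteq V(G)\setminus A$, and in the next time step each $v\in D\subseteq V(G)\setminus A$ has $u_v$ as its \emph{unique} unfilled neighbor (no other vertex of $A$ is adjacent to $v$, by the defining property of external private neighbors), so the standard rule fills all of $A$. This yields $\thpdx(G)\le |D'|(1+2)\le \tfrac{3}{2}(n-\gamma)$. Finally I combine: if $\gamma\le 3n/7$ the first estimate gives $\thpdx(G)\le 2\gamma\le \tfrac{6n}{7}$, and if $\gamma\ge 3n/7$ the second gives $\thpdx(G)\le\tfrac{3}{2}(n-\gamma)\le\tfrac{6n}{7}$; the two bounds coincide exactly at $\gamma=3n/7$.

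I expect the delicate point to be the verification that $\ppt(G,D')\le 2$ — specifically, confirming that immediately after the domination step the only unfilled neighbor of each $v\in D$ is precisely its chosen private neighbor $u_v$. This is exactly what forces $A$ to be built from external private neighbors rather than arbitrary neighbors, and it is also why Lemma~\ref{thm:maximin} (and hence the passage to an \emph{optimal}, not merely edge-maximum, dominating set) is needed: without it $G'$ could have an isolated vertex and Theorem~\ref{dom-bd} would not bound $|D'|$, breaking the second estimate. Everything else is the routine balancing computation above.
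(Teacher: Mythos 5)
Your proof is correct and takes essentially the same route as the paper's: both use an optimal dominating set, the set $A$ of chosen external private neighbors via Lemmas \ref{thm:edge-maximum} and \ref{thm:maximin}, and Theorem \ref{dom-bd} applied to $G[V(G)\setminus A]$ to produce a power dominating set with propagation time at most two, including your correct identification of the key verification that after the domination step each $v\in D$ has $u_v$ as its unique unfilled neighbor. Your explicit balancing of $2\gamma$ against $\tfrac{3}{2}(n-\gamma)$ at $\gamma=\tfrac{3n}{7}$ is merely a cleaner rephrasing of the paper's $\epsilon$-parameterized case split.
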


\begin{proof}
Let $D$ be an optimal dominating set of $G$.  If $|D|\leq\frac{3n}{7}$,  then 
$\thpdx(G)\leq\thpdx(G,D)=|D|(1+\ppt(G,D))\leq\frac{6n}{7}$, since $\ppt(G,D)=1$.  So assume $|D|>\frac{3n}{7}$, and let $|D|=\frac{3n}{7}+\epsilon$ for some $0<\epsilon\leq\frac{n}{14}$, where the upper bound comes from Theorem \ref{dom-bd}.  
 In the steps that follow, we create a power dominating set $P$ such that $\thpdx(G,P)<\frac{6n}{7}$.

By Lemma~\ref{thm:edge-maximum}, for every $v\in D$  there exists a vertex $u_v\in\epn[v,D]$. Let $A=\{u_v:\ v\in D\}$. 
Then $|A|=\frac{3n}{7}+\epsilon$.
By Lemma~\ref{thm:maximin}, $G[ V(G) \setminus A]$ contains no isolated vertices. Let $P$ be a minimum dominating set of $G[ V(G) \setminus A]$, which contains no more than half of the vertices in $ V(G) \setminus A$  by Theorem \ref{dom-bd}. After one  time step  of power domination (i.e., the domination step) in $G$ using the set $P$, all vertices in $ V(G) \setminus A$ are filled, and each unfilled vertex in $A$ is the unique unfilled neighbor of some vertex in $D$. Hence, every vertex of $G$ is filled in at most two time steps  
of power domination. 

From our construction,   $|P| \leq \frac{|V(G) \setminus A|}{2} = \frac{n - (3n/7 + \epsilon)}{2} $ and $\ppt(G,P) \le 2$. 
Therefore, $\thpdx(G,P)=|P|(1+\ppt(G,P))<\frac{6n}{7}$.
Example \ref{ex:6n/7} shows that the bound is sharp.
\end{proof}

 An examination of the proof of Theorem \ref{thm:6/7} establishes the  next result. 
 
\begin{cor}\label{c:th67gamm37}
Let $G$ be a connected graph of order $n$. If $\thpdx(G)=\frac{6n}{7}$, then $\gamma(G)=\frac{3n}{7}$.
\end{cor}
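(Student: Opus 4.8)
The corollary asks us to show that if $\thpdx(G)=\frac{6n}{7}$, then the domination number is exactly $\frac{3n}{7}$. The natural approach is to trace through the proof of Theorem~\ref{thm:6/7} and extract the conditions under which the strict inequality $\thpdx(G,P)<\frac{6n}{7}$ fails to give anything better, forcing equality throughout.

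\medskip

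\noindent\textbf{Proof proposal.} The plan is to argue by contrapositive-style extraction from the proof of Theorem~\ref{thm:6/7}. Suppose $\thpdx(G)=\frac{6n}{7}$; I will show $\gamma(G)=\frac{3n}{7}$. First, recall that by Theorem~\ref{dom-bd} we always have $\gamma(G)\le\frac n2$, and the proof of Theorem~\ref{thm:6/7} splits on the size of an optimal dominating set $D$. In the case $|D|\le\frac{3n}{7}$, we obtained $\thpdx(G)\le|D|(1+1)=2|D|\le\frac{6n}{7}$; if $|D|<\frac{3n}{7}$ strictly, this yields $\thpdx(G)<\frac{6n}{7}$, contradicting the hypothesis. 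Hence in this case we must have $|D|=\frac{3n}{7}$, and since $D$ is a minimum dominating set, $\gamma(G)=\frac{3n}{7}$ as claimed.

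\medskip

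\noindent The remaining case is $|D|>\frac{3n}{7}$, and here I would show it cannot occur under the hypothesis. Write $|D|=\frac{3n}{7}+\epsilon$ with $\epsilon>0$. Following the construction in the proof, we build the set $A$ of external private neighbors (one per vertex of $D$, via Lemma~\ref{thm:edge-maximum}) so that $|A|=\frac{3n}{7}+\epsilon$, and then take $P$ to be a minimum dominating set of $G[V(G)\setminus A]$, which has no isolated vertices by Lemma~\ref{thm:maximin}. Theorem~\ref{dom-bd} then gives $|P|\le\frac{|V(G)\setminus A|}{2}=\frac{n-(3n/7+\epsilon)}{2}=\frac{4n/7-\epsilon}{2}=\frac{2n}{7}-\frac{\epsilon}{2}$, and $\ppt(G,P)\le 2$, so that
\[
\thpdx(G)\le\thpdx(G,P)=|P|(1+\ppt(G,P))\le 3\lp\frac{2n}{7}-\frac{\epsilon}{2}\rp=\frac{6n}{7}-\frac{3\epsilon}{2}<\frac{6n}{7}.
\]
This strict inequality contradicts $\thpdx(G)=\frac{6n}{7}$, so the case $|D|>\frac{3n}{7}$ is impossible. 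Therefore $|D|=\frac{3n}{7}$ and $\gamma(G)=\frac{3n}{7}$.

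\medskip

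\noindent The main subtlety, and the step I would check most carefully, is ensuring that the strict inequality genuinely persists for \emph{every} $\epsilon>0$ rather than merely an asymptotic or generic statement: the bound $|P|\le\frac{2n}{7}-\frac{\epsilon}{2}$ relies on Theorem~\ref{dom-bd} applied to $G[V(G)\setminus A]$, which is valid precisely because Lemma~\ref{thm:maximin} guarantees no isolated vertices there, and this in turn requires $G$ connected of order at least three. Since the theorem and corollary share the hypothesis that $G$ is connected (and the interesting regime has $n\ge 3$), these prerequisites are met, and the quantitative gap $\frac{3\epsilon}{2}$ is strictly positive whenever $\epsilon>0$, closing the argument.
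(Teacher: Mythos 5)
Your proposal is correct and takes essentially the same approach the paper intends: the paper derives this corollary by ``an examination of the proof of Theorem \ref{thm:6/7},'' which is exactly your case analysis, where $|D| > \frac{3n}{7}$ forces the strict bound $\thpdx(G) \le 3\left(\frac{2n}{7}-\frac{\epsilon}{2}\right) = \frac{6n}{7}-\frac{3\epsilon}{2} < \frac{6n}{7}$ (impossible under the hypothesis) and $|D| < \frac{3n}{7}$ forces $\thpdx(G) \le 2|D| < \frac{6n}{7}$, leaving $\gamma(G) = |D| = \frac{3n}{7}$. Your closing remark about $n \ge 3$ is handled correctly as well, since for $n \le 2$ one has $\thpdx(G) = n \ne \frac{6n}{7}$ and the hypothesis is vacuous.
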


The converse of Corollary \ref{c:th67gamm37} is false, as seen in the next example.  

\begin{ex}
  Let $G$ be the spider $S(2,2,1,1)$ and let $c$ denote the vertex of degree four.   Then $|V(G)| = 7$ and $\gamma(G) = 3$, since each leaf can be dominated only by itself or its neighbor.  
  However, $\thpdx(G) \leq \thpdx(G,\{c\}) = 1(1 + 2) = 3 < 6$. In fact, $\thpdx(G) = 3$.
\end{ex}

As discussed in Example 11.57 of \cite{HLSbook}, $\thpd(G)= \lf \frac n 3 \rf +2$ when  $G$ is a graph constructed    by adding a new leaf  to one leaf of $H\circ 2K_1$ where $H$ is  a connected graph of order at least two. 
We use a technique similar to that in the proof of Theorem \ref{thm:6/7} to show that $\lf \frac n 3 \rf +2$ is an upper bound on sum throttling for power domination (and is necessarily sharp), resolving the issue of the highest possible value of $\thpd(G)$ for a connected graph $G$ of  order $n$, which was discussed in \cite[page 249]{HLSbook}.

\begin{thm}\label{thm:limsuptake2}
Let $G$ be a connected graph of order $n$. Then  $\thpd(G)\le \lf \frac n 3 \rf +2$  
and this bound is sharp.
\end{thm}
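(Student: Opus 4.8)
The plan is to adapt the two-strategy idea behind Theorem~\ref{thm:6/7} to the sum $|P|+\ppt(G,P)$: for each connected $G$ I would exhibit two candidate power dominating sets and argue that the better of the two always meets the bound. The cases $n\le 2$ are immediate, since taking every vertex filled gives $\thpd(G)\le n$ and $n\le\lf\frac n3\rf+2$ for $n\in\{1,2\}$. So assume $n\ge 3$ and fix an optimal dominating set $D$ of $G$, writing $\gamma=|D|=\gamma(G)$ and recalling $\gamma\le\frac n2$ from Theorem~\ref{dom-bd}.

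The first candidate is $P=D$ itself. Because $D$ is a dominating set, the domination step alone fills all of $V(G)$, so $\ppt(G,D)=1$ and hence $\thpd(G)\le\gamma+1$. The second candidate is built exactly as in the proof of Theorem~\ref{thm:6/7}: using Lemma~\ref{thm:edge-maximum} choose $u_v\in\epn[v,D]$ for each $v\in D$ and set $A=\{u_v:v\in D\}$, so $|A|=\gamma$; by Lemma~\ref{thm:maximin} the graph $G[V(G)\setminus A]$ has no isolated vertices, hence admits a minimum dominating set $P$ with $|P|\le\frac{n-\gamma}{2}$ by Theorem~\ref{dom-bd}. The same two-step filling argument as in Theorem~\ref{thm:6/7} — the domination step fills $V(G)\setminus A$, after which each $u_v$ is the unique unfilled neighbor of $v\in D$ — shows $\ppt(G,P)\le 2$, giving $\thpd(G)\le\frac{n-\gamma}{2}+2$.

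It then remains to combine these into $\thpd(G)\le\min\bigl(\gamma+1,\ \tfrac{n-\gamma}{2}+2\bigr)$ and to check this never exceeds $\lf\frac n3\rf+2$. Since the thresholds $\lf\frac n3\rf+1$ and $\lf\frac n3\rf+2$ are consecutive integers, every integer $\gamma$ falls into one of two complementary cases. If $\gamma\le\lf\frac n3\rf+1$, the first candidate already yields $\thpd(G)\le\gamma+1\le\lf\frac n3\rf+2$. If instead $\gamma\ge\lf\frac n3\rf+2$, the elementary inequality $3\lf\frac n3\rf\ge n-2$ gives $\tfrac{n-\gamma}{2}\le\tfrac{n-\lf n/3\rf-2}{2}\le\lf\frac n3\rf$, so the integer $|P|$ is at most $\lf\frac n3\rf$ and the second candidate yields $\thpd(G)\le\lf\frac n3\rf+2$.

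I expect the only real obstacle to be the floor bookkeeping of the last paragraph: one must ensure the integer constraints on $|P|$ and on $\gamma$ line up so that neither candidate overshoots the integer $\lf\frac n3\rf+2$, and this all collapses to the single fact $3\lf\frac n3\rf\ge n-2$. Sharpness requires no new work: the graphs obtained by attaching one extra leaf to a leaf of $H\circ 2K_1$ (Example~11.57 of \cite{HLSbook}, recalled just before the theorem) satisfy $\thpd(G)=\lf\frac n3\rf+2$, so the bound cannot be improved.
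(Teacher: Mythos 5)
Your proposal is correct and follows essentially the same route as the paper's proof: the same two candidate power dominating sets (the optimal dominating set $D$ itself when $\gamma$ is small, and otherwise the minimum dominating set $P$ of $G[V(G)\setminus A]$ built via Lemmas~\ref{thm:edge-maximum} and~\ref{thm:maximin}, with $\ppt(G,P)\le 2$), the same case split on the size of $\gamma$, and the same sharpness example. The only difference is cosmetic bookkeeping --- you split at the integer threshold $\gamma\le\lf \frac n 3\rf+1$ versus $\gamma\ge\lf \frac n 3\rf+2$ and invoke $3\lf \frac n 3\rf\ge n-2$, whereas the paper splits at $|D|\le \frac n3$ and parametrizes $|D|=\frac n3+\frac13+\epsilon$ before taking floors.
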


\begin{proof}
 If $n=1$  or 2, the result is immediate, so assume $n\ge 3$. 
Let $D$ be an optimal dominating set of $G$. If $|D|\leq\frac{n}{3}$, then $\thpd(G)\leq\thpd(G,D)=|D|+1\leq\frac{n}{3}+1$, so $\thpd(G) \le \lf \frac n 3 \rf +1$ by taking the floor of both sides. Suppose $|D|>\frac{n}{3}$; then $|D|\geq \frac{n}{3}+\frac{1}{3}$. Since $|D|\leq\frac{n}{2}$, we let $|D|=\frac{n}{3}+\frac{1}{3}+\epsilon$ for some $0\leq\epsilon\leq\frac{n}{6}-\frac{1}{3}$. We create a power dominating set $P$ such that $\thpd(G,P)\le \lf \frac n 3 \rf +2$.

By Lemma~\ref{thm:edge-maximum}, for every vertex $v\in D$ there exists a vertex $u_v\in\epn[v,D]$. Let $A=\{u_v:\ v\in D\}$. 
Then $|A|=\frac{n}{3}+\frac{1}{3}+\epsilon$.
Let $P$ be a minimum dominating set of $G[ V(G)\setminus A]$. By the same argument given in the proof of Theorem \ref{thm:6/7},  the propagation time for $P$ is at most two.
From our construction, $|P|\le \frac{ |V(G) \setminus A|}{2} = \frac{n - (n/3 + 1/3 + \epsilon)}{2} = \frac{n}{3}-\frac{1}{6}-\frac{\epsilon}{2}$.  
Therefore, $\thpd(G)\leq\frac{n}{3}-\frac{1}{6}-\frac{\epsilon}{2}+2\le \frac{n}{3}+2$.  
 This implies  $\thpd(G)\le \lf\frac n 3\rf + 2$. 
\end{proof}



\section{ Graph operations for product throttling}\label{s:graoh-ops}

In this section we establish sharp bounds on the effect of edge deletion, vertex deletion, edge contraction, and edge subdivision for both types of product throttling for the parameters power domination, PSD forcing, and standard zero forcing.

Let $G$ be a graph,  $x$ be a vertex of $G$, and  $e=uv$ be an edge of $G$. The graph obtained from $G$ by deleting $x$ and all edges incident to $x$ is denoted by $G-x$, and the graph obtained from $G$ by deleting $e$ is denoted by $G-e$. The  \emph{contraction} $G/e$ of $G$ by $e$ is the graph obtained from $G$ by  identifying $u$ and $v$ (\emph{contracting} $e$) and suppressing any loops or multiple edges that arise in this process. 
The \emph{subdivision} of   $e$ in $G$, denoted by $G_e$, is the graph obtained from $G$ by adding a new vertex $z$, adding two edges $uz$ and $vz$, and removing the edge $e$.

 The next result might be described as folklore --- the easy proofs   have appeared in various places, although the results being proved were not always stated in this form. The proof of Proposition 11.9 in \cite{HLSbook} establishes \eqref{EC1}, \eqref{EC2},  and   \eqref{EC1-subdiv}; Conrad also proved \eqref{EC2}  for PSD propagation time in  \cite{C22}.  
Remark 11.4 in \cite{HLSbook} covers \eqref{EC1v}, and the proof of Proposition 11.8 in \cite{HLSbook} establishes  \eqref{EC1-contract} and \eqref{EC2-subdiv}. Theorem 8 in \cite{DVV16} basically proves \eqref{EC2-contract}, but since the terminology is very different, we provide a brief proof of 
\eqref{EC2-contract}.

\begin{lem}\label{l:EC}
    Let  $Y$ be one of the parameters $\pd, \Zp, \Z$. Let $G$ be a graph,  $x$ be a vertex of $G$,   $e=uv$ be an edge of $G$,   $y_e$ be the vertex formed by contracting $e$, and   $z_e$ be the vertex added by subdividing $e$. 
    
    \ben[$(1)$]
      \item\label{EC1} If $B'$ is a $Y$-set for $G-e$, then $\ptx(G, B'\cup\{w\})\le \ptx(G-e,B')$ for some $w\in\{u,v\}$.
      \item\label{EC2} If $B$ is a $Y$-set for $G$, then 
    $\ptx(G-e, B\cup\{w\})\le \ptx(G,B)$  for some $w\in\{u,v\}$.
      \item\label{EC1v} If $B'$ is a $Y$-set for $G-x$, then $\ptx(G, B'\cup\{x\})\le \ptx(G-x,B')$.
    \item\label{EC1-contract} 
 If $B'$ is a $Y$-set for $G/e$, then $\ptx(G, B)\le \ptx(G/e,B')$  where $B=B'\setminus \{y_e\}\cup \{u,v\}$ if ${y_e}\in B$ and $B= B'\cup\{w\}$ for some $w\in\{u,v\}$ otherwise. 
 \item\label{EC2-contract} 
If $B$ is a $\pd$-set for $G$, then $\ppt(G/e, B\setminus\{u,v\}\cup\{y_e\})\le \ppt(G,B)$  (there is no assumption that $u,v\in B$). 
\item\label{EC1-subdiv} 
If $B'$ is a $Y$-set for $G_e$, then 
$\ptx(G, B)\le \ptx(G_e,B')$
where  $B=B'\setminus\{z_e\}\cup\{w\}$ for some $w\in\{u,v\}$ if ${z_e}\in B$ and  $B=B'$   otherwise.
\item\label{EC2-subdiv}  If $B$ is a $Y$-set for $G$, then   $\ptx(G_e, B\cup\{w\})\le \ptx(G,B)$  for some $w\in\{u,v\}$. 
\een
\end{lem}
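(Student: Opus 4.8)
The plan is to prove each of the seven statements in Lemma~\ref{l:EC} by tracking the propagation process on the modified graph and showing that the indicated augmentation of the forcing/dominating set ``simulates'' or ``dominates'' the process on the original graph, step by step. The unifying principle is a monotonicity argument: if at every time step the set of filled vertices in one process contains (a natural image of) the set filled in the other, then the propagation time can only decrease. I would set up, for each part, an inductive claim of the form $B^{[i]} \supseteq \big(\text{image of } (B')^{[i]}\big)$, or the reverse, and verify that each color change rule that fires in one graph still fires (or is no longer needed) in the other.

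For parts \eqref{EC1}, \eqref{EC2}, \eqref{EC1v}, and \eqref{EC2-subdiv}, the key observation is that deleting an edge or vertex removes at most ``one obstruction'': when edge $e=uv$ is present, a force along $e$ requires one of its endpoints to already be filled, so by adding the appropriate endpoint $w\in\{u,v\}$ to the set on the graph where the edge is absent (or present), every force that occurred in the reference process can be replicated. First I would fix a successful propagation sequence on the reference graph, then show by induction on time steps that the same forces are available in the target graph once $w$ is seeded. The choice of $w$ is dictated by which endpoint needs to be filled first to recover the lost force; since we only need existence of \emph{some} $w\in\{u,v\}$, we may simply take whichever endpoint is filled earlier in the reference process. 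For the subdivision parts \eqref{EC1-subdiv} and \eqref{EC2-subdiv} and the contraction parts \eqref{EC1-contract} and \eqref{EC2-contract}, the bookkeeping is slightly more delicate because the vertex sets differ: the subdivision vertex $z_e$ and the contracted vertex $y_e$ must be matched with $u$, $v$, or the pair $\{u,v\}$. I would handle $z_e$ by noting that filling one endpoint of $e$ in $G$ plays the role of filling $z_e$ in $G_e$ (a force through $z_e$ takes two steps in $G_e$ but at most the same number in $G$), and handle $y_e$ by identifying the neighborhoods $N_{G/e}(y_e) = N_G(u)\cup N_G(v)\setminus\{u,v\}$.

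The only part the excerpt flags as warranting a written argument is \eqref{EC2-contract}, so that is where I expect the real work to lie; the others are explicitly described as folklore with references. For \eqref{EC2-contract}, let $B$ be a power dominating set of $G$ and set $\hat B = (B\setminus\{u,v\})\cup\{y_e\}$. The subtlety is that power domination begins with a domination step (fill all neighbors of filled vertices) and only afterward applies the standard rule, so I must check both phases separately. I would argue that after the domination step, the filled set in $G/e$ under $\hat B$ contains the image under the contraction map $\pi\colon V(G)\to V(G/e)$ of the filled set in $G$ under $B$, using that $N_{G/e}(y_e)$ absorbs all neighbors of $u$ and $v$; then I would show by induction that this containment is preserved under the standard color change rule, the point being that if $w$ is the unique unfilled neighbor of a filled vertex in $G$, then $\pi(w)$ is either already filled in $G/e$ or is the unique unfilled neighbor of $\pi$ of that vertex (contraction can only merge neighbors, never create new unfilled ones).

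The main obstacle I anticipate is the induction step for the standard rule under contraction: contracting $e$ can \emph{merge} two distinct unfilled neighbors of a vertex into a single vertex $y_e$, which could in principle turn a non-force in $G$ into a force in $G/e$ or, conversely, obscure a force. I must verify that this merging never \emph{delays} propagation in $G/e$ relative to $G$ --- i.e., that each force available in $G$ remains available (possibly trivially, if the target is already filled) in $G/e$, so that $\hat B$ fills everything in no more steps. Establishing that $\pi$ sends the filled set forward correctly, and carefully treating the case where the forcing vertex is itself $u$ or $v$ (hence becomes $y_e$), is the delicate calculation; once the containment $\pi(B^{[i]}) \subseteq \hat B^{[i]}$ is verified for all $i$, the bound $\ppt(G/e,\hat B)\le \ppt(G,B)$ follows immediately.
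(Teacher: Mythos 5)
Your treatment of part \eqref{EC2-contract}, the only part for which the paper writes out a proof, matches the paper's argument in substance: the paper invokes Theorem 8 of \cite{DVV16} to conclude that $\hat B=(B\setminus\{u,v\})\cup\{y_e\}$ is a power dominating set of $G/e$, and observes that since $y_e\in\hat B$, all of $N_{G/e}[y_e]$ (the image of $N_G[u]\cup N_G[v]$) is filled after the domination step, so the standard forcing can then ``proceed as in $G$.'' Your contraction-map induction $\pi(B^{[i]})\subseteq \hat B^{[i]}$ is exactly a careful write-up of that one-line claim, and your resolution of the delicate case --- when the forcing vertex is $u$ or $v$, its target lies in $N_G(u)\cup N_G(v)$, so its image is already filled at time $1$ (``possibly trivially, if the target is already filled'') --- is the correct and necessary one, since $y_e$ may well have other unfilled neighbors inherited from the non-forcing endpoint and so cannot itself replicate the force. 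Note also that your anticipated ``merging'' obstacle largely evaporates: the only identified pair is $\{u,v\}\mapsto y_e$, and $y_e$ is filled from time $0$.

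There is, however, one concrete misstep in your sketch of the edge parts \eqref{EC1} and \eqref{EC2}: the selection rule ``take whichever endpoint is filled earlier in the reference process'' is backwards. Take $G=K_3$ on $\{u,v,w\}$, $e=uv$, $Y=\Z$, and $B'=\{u\}$: in $G-e\cong P_3$ one has $u$ filled at time $0$ and $v$ at time $2$, so your rule seeds $u$, giving $B=\{u\}$; but $\ptz(K_3,\{u\})=\infty$, whereas part \eqref{EC1} demands $\ptz(G,B'\cup\{w\})\le 2$ for some $w\in\{u,v\}$, which holds only for $w=v$. The correct choice is the endpoint filled \emph{later} in the reference process (in part \eqref{EC2}, the forced endpoint when a force travels along $e$). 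With that choice the simulation does go through for all three parameters, including PSD: if $t_u\le t_v$ and you seed $v$, then every force performed by $v$ in the reference process occurs at a time step after $t_v\ge t_u$, by which point $u$ is already filled by the inductive containment, so the extra adjacency $uv$ never leaves $v$ with a second unfilled neighbor; and since $v$ is filled from time $0$, the edge $e$ never lies in the induced unfilled subgraph, so the component structure used by the PSD color change rule is unaffected. This is a local fix, and these parts are folklore that the paper only cites rather than proves, but as written your stated rule would fail.
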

 \bpf \eqref{EC2-contract}: Let $B$ be a power dominating set of $G$. The proof of Theorem 8 in \cite{DVV16} shows that $B'=B\setminus\{u,v\}\cup\{y_e\}$ is a power dominating set of $G/e$.  Specifically every vertex that was dominated by $B$ in $G$ is now dominated by $B'$ in $G/e$.  Since $N_G[u]$ and $N_G[v]$ are dominated by $B'$ in $G/e$, the forcing process can proceed as in $G$ and the propagation time will not increase. 
\epf

Each of the inequalities in the next result is sharp as seen in the    subsequent examples.  

\begin{prop}\label{uprodth-edgedel}
    Let  $Y$ be one of the parameters $\pd$ or $\Zp$, let $G$ be a graph of order $n$, let $x$ be a vertex of $G$, and let $e=uv$ be an edge of $G$.

  \ben[$(1)$]
  \item\label{edge}   $\frac{1}{2} \thxa(G) \leq \thxa(G-e)\le 2\thxa(G)\quad \mbox{and}\quad\frac{1}{2} \thxx(G) \leq \thxx(G-e)\le 2\thxx(G).$
  \item\label{vertex}  $\frac{1}{2} \thxa(G) \leq \thxa(G-x)\quad \mbox{and}\quad\frac{1}{2} \thxx(G) \leq \thxx(G-x).$
   \item\label{uprodth-edgeops-contract-pd}  $\frac{1}{2} \thpda(G) \leq \thpda(G/e)\le  2\thpda(G) \quad \mbox{and}\quad\frac{1}{2} \thpdx(G) \leq \thpdx(G/e)\le  2 \thpdx(G).$  
   \item\label{uprodth-edgeops-contract-Zp}  $\frac{1}{2} \thpa(G) \leq \thpa(G/e)\quad \mbox{and}\quad\frac{1}{2} \thpx(G) \leq \thpx(G/e).$ \item\label{uprodth-edgeops-2} $\thxa(G) \leq \thxa(G_e)\le 2\thxa(G)$.
    \item\label{uprodth-edgeops-3} $\thpdx(G) \leq \thpdx(G_e)\le 2\thpdx(G)
   \quad \mbox{and}\quad  \thpx(G) \leq \thpx(G_e)\le \frac 3 2\thpx(G)$.
  \een
\end{prop}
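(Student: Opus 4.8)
The plan is to derive every inequality from the propagation-time comparisons collected in Lemma~\ref{l:EC}, exploiting the single arithmetic fact underlying all the factor-$2$ statements: a throttling set has at least one vertex, so $|B|+1\le 2|B|$. Both product throttling functions have the form $|B|\cdot f(\ptx(G,B))$ with $f(t)=t$ (no initial cost) or $f(t)=1+t$ (initial cost), and $f$ is nondecreasing. Hence, whenever an operation lets us convert an optimal throttling set on one graph into a valid $Y$-set on the other whose size grows by at most one and whose propagation time does not increase, the throttling value grows by at most a factor of $2$. I would prove each bound by producing exactly such a transferred set from the appropriate item of Lemma~\ref{l:EC}.

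For part~\eqref{edge} I would use Lemma~\ref{l:EC}\eqref{EC2} for the upper bounds (take $B$ optimal for $G$, obtain $B\cup\{w\}$ forcing $G-e$ with no larger propagation time) and Lemma~\ref{l:EC}\eqref{EC1} for the lower bounds, the size growing by at most one in each case, giving factor $2$ for both $\thxa$ and $\thxx$. Part~\eqref{vertex} is one-directional: Lemma~\ref{l:EC}\eqref{EC1v} converts an optimal set $B'$ for $G-x$ into $B'\cup\{x\}$ for $G$ with propagation time at most $\ptx(G-x,B')$, yielding $\thxa(G)\le 2\thxa(G-x)$ and likewise for $\thxx$ (no reverse transfer is cheap after a vertex deletion, so no upper bound is claimed). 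For contraction, the lower bounds in both \eqref{uprodth-edgeops-contract-pd} and \eqref{uprodth-edgeops-contract-Zp} come from Lemma~\ref{l:EC}\eqref{EC1-contract}, where the transferred set satisfies $|B|\le|B'|+1$; this item holds for all three parameters, which is exactly why the lower bound is available for $\Zp$ as well as $\pd$. The contraction \emph{upper} bound exists only for power domination, via Lemma~\ref{l:EC}\eqref{EC2-contract}: from a $\pd$-set $B$ of $G$ we get $B'=B\setminus\{u,v\}\cup\{y_e\}$ for $G/e$ with $|B'|\le|B|+1$ and non-increasing propagation time, giving $\thpda(G/e)\le 2\thpda(G)$ and the analogous $\thpdx$ statement — and this is precisely why part~\eqref{uprodth-edgeops-contract-Zp} states no upper bound.

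The subdivision lower bounds are special in holding with factor exactly $1$. I would obtain them from Lemma~\ref{l:EC}\eqref{EC1-subdiv}: transferring an optimal set $B'$ for $G_e$ back to $G$ produces $B$ with $|B|\le|B'|$ (the inserted vertex $z_e$, if used, is replaced by one endpoint) and non-increasing propagation time, so no size growth occurs, and $\thxa(G)\le\thxa(G_e)$, $\thpdx(G)\le\thpdx(G_e)$, and $\thpx(G)\le\thpx(G_e)$ follow at once. The subdivision upper bounds $\thxa(G_e)\le 2\thxa(G)$ in \eqref{uprodth-edgeops-2} and $\thpdx(G_e)\le 2\thpdx(G)$ in \eqref{uprodth-edgeops-3} then use Lemma~\ref{l:EC}\eqref{EC2-subdiv} in the standard $|B|+1\le 2|B|$ manner.

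The one genuinely finer estimate — and the step I expect to be the main obstacle — is the PSD bound $\thpx(G_e)\le\frac32\thpx(G)$. Starting from an optimal PSD throttling set $B$ of $G$, Lemma~\ref{l:EC}\eqref{EC2-subdiv} gives $\thpx(G_e)\le(|B|+1)(1+\ptp(G,B))$. When $|B|\ge 2$ we have $|B|+1\le\frac32|B|$, which already yields the factor $\frac32$. The case $|B|=1$ is the crux, and here the doubling estimate is too weak: I would instead use that $\{b\}$ being a PSD forcing set forces $G$ to be a tree (a graph has PSD forcing number $1$ exactly when it is a tree), whence $G_e$ is a tree as well. For a tree, single-source PSD forcing proceeds exactly as breadth-first search from the source, so $\ptp(G,\{b\})=\ecc_G(b)$ and $\ptp(G_e,\{b\})=\ecc_{G_e}(b)$; since subdividing one edge increases every pairwise distance by at most one, $\ecc_{G_e}(b)\le\ecc_G(b)+1$. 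Writing $t=\ptp(G,\{b\})$, this gives $\thpx(G_e)\le 1+(t+1)=\thpx(G)+1$, and because $\thpx(G)=1+t\ge 2$ (a single vertex in a graph of order at least two has propagation time at least one), we conclude $\thpx(G)+1\le\frac32\thpx(G)$; the example $G=P_2$, $G_e=P_3$ shows this is tight. Finally, I would note that whenever adjoining a vertex would enlarge a set to the full vertex set (outside the admissible range $1\le k\le n-1$ for $\thxa$), the desired inequality follows instead from the universal bounds of Remark~\ref{bds-u}; these boundary checks are routine and dispatched in a sentence.
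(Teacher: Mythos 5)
Your proposal follows the paper's strategy exactly for all the factor-$2$ inequalities: transfer an optimal set through the relevant item of Lemma~\ref{l:EC}, note the size grows by at most one and the propagation time does not increase, and observe that one extra vertex at worst doubles $|B|$ (with the subdivision lower bounds free of any size growth). The one place where you genuinely diverge is the crux bound $\thpx(G_e)\le\frac32\thpx(G)$. The paper cites \cite[Prop.~10.37]{HLSbook}: the optimal set $B$ is \emph{itself} a $\Zp$-set of $G_e$ with $\ptp(G_e,B)\le\ptp(G,B)+1$, so with $t=\ptp(G,B)\ge1$ one gets the ratio $\frac{t+2}{t+1}\le\frac32$, and the case $t=0$ (i.e., $B=V(G)$) is handled separately via $B'=V(G_e)$. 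You instead split on $|B|$: for $|B|\ge2$ the enlargement from Lemma~\ref{l:EC}\eqref{EC2-subdiv} costs only $\frac{|B|+1}{|B|}\le\frac32$, and for $|B|=1$ you use that $\Zp(G)=1$ forces $G$ to be a tree (a fact the paper itself invokes as \cite[Thm.~9.45]{HLSbook}), that single-source PSD propagation in a tree proceeds by BFS layers so the propagation time equals the eccentricity, and that subdividing one edge raises the eccentricity by at most one, giving $\thpx(G_e)\le\thpx(G)+1\le\frac32\thpx(G)$ since $\thpx(G)=1+t\ge2$. All of these claims are correct (in a tree a filled ball of radius $t$ has at most one neighbor in each unfilled component, so the ball of radius $t+1$ fills next). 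In effect you reprove the tree instance of Prop.~10.37 from scratch: the paper's route is shorter by citation, while yours is self-contained and isolates exactly where the cited proposition is indispensable, namely $|B|=1$.

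One wrinkle needs patching in your $|B|\ge2$ branch. The optimal set can be $B=V(G)$ with $\ptp(G,B)=0$ (e.g., $G=K_n$ for $n\ge3$, where $\thpx(K_n)=n$ beats $(n-1)\cdot2$), and in that degenerate case Lemma~\ref{l:EC}\eqref{EC2-subdiv} does not literally hold: $B\cup\{w\}=V(G)$ for any $w\in\{u,v\}$, yet $V(G)$ does not fill $G_e$ in zero rounds, and using its true propagation time $1$ gives only $\thpx(G_e,V(G))=2n\not\le\frac32 n$. The fix is exactly the paper's separate case: take $B'=V(G_e)$, so $\thpx(G_e)\le n+1\le\frac32 n$ for $n\ge2$. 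Your closing sentence about boundary checks via Remark~\ref{bds-u} gestures at this but is phrased for the admissible range of $\thxa$; spell out this one case for $\thpx$ and your argument is complete.
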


 \bpf All but one of the inequalities  follow by choosing an initial $Y$-set to achieve the product throttling number, adjusting the $Y$-set (possibly increasing its size by one)  as specified in Lemma \ref{l:EC} to fill all vertices without increasing the propagation time, and noting that 
 the numerically most extreme case is when the initial set has only one vertex, so one additional vertex doubles its size. The exception is $ \thpx(G_e)\le \frac 3 2\thpx(G)$ in 
 \eqref{uprodth-edgeops-3}.
   
 Let $B$ be a $\Zp$-set for $G$ such that $\thpx(G) = \thpx(G, B)$.
It is shown    in \cite[Prop 10.37]{HLSbook} that $B$ is also a $\Zp$-set for $G_e$ with $\ptp(G_e, B) \leq \ptp(G, B) + 1$.  If $\ptp(G, B)\ge 1$, then
\[\small \thpx(G_e) \leq |B|(\ptp(G_e, B) + 1) \leq |B|(\ptp(G, B) + 2) = \frac{\ptp(G, B) + 2}{\ptp(G, B)  + 1}\thpx(G) \leq \frac{3}{2}\thpx(G). \]  
 If $\ptp(G, B)=0$, then $B=V(G)$ and setting $B'=V(G_e)$ gives  $\thpx(G_e,B')=n+1$; since $G$ has an edge, $ \thpx(G_e) \leq  \left( \frac {n+1}n\right) \thpx(G) \leq \frac{3}{2}\thpx(G)$. \epf

The bounds in the previous result are also true for $\Z$ but not useful since there are better bounds for each of the parameters except $\thzx$ (see Proposition \ref{z-k-induced} and Equation \eqref{eq:no-thzx}).

Let $G$ be a graph.  For a power dominating set or PSD forcing set $B\ne V(G)$  that achieves the relevant product throttling number, adding leaves to any vertex in $B$  does not change the product throttling number. Hence, each of the graphs in Examples  \ref{ex:edge-lower}--\ref{ex:upper-subdiv} can be expanded to an arbitrarily large order with the same product throttling numbers. 

 \begin{ex}\label{ex:edge-lower}         
        To see that  the lower bounds for $\thxa(G-e)$ and $\thxx(G-e)$  in Proposition \ref{uprodth-edgedel}\eqref{edge} are sharp with $Y\in\{\pd,\Zp\}$, 
        let $G$ be the graph shown in  Figure \ref{f:uprodth-edgeops-LB} including the dotted edge $e$; $G-e=S(3,3,3,3,3,3)$, the spider with six legs each of order three.  Then $\thxa(G-e)\le 3$ by choosing $B'=\{c\}$ as the  $Y$-set, and $\thxa(G)\le 6$ by choosing  $ B=\{c,u\}$ where $u$ is the endpoint of  $e$ not adjacent to $c$.  
        Similarly, $\thxx(G-e)\le 4$ and $\thxx(G)\le 8$ with the same $B'$ and $B$.  We show that no lower values can be obtained: The $Y$-sets $B$  and $B'$ have propagation time $3$. For $G-e=S(3,3,3,3,3,3)$, propagation time 2 requires  at least six vertices and propagation time  1  requires at least  seven  vertices for  $Y\in\{\pd,\Zp\}$.  For $G$, achieving propagation time less than $ 3$  requires seven  vertices for $Y=\Z_+$ and six vertices for $Y=\gamma_P$.       
  \begin{figure}[h!]
     \centering
       
   \begin{tikzpicture}
     \draw (180:3) -- (0:3);
     \draw (-60:3) -- (120:3);
     \draw (60:3) -- (-120:3);
     \draw[dotted, very thick] (0.5,0.866) .. controls (40:2.125) .. (1.5,2.598);
     \foreach \d in {0,60,...,300}{
        \foreach \r in {0,...,3}{
            \draw[fill=white] (\d:\r) circle (0.1);
        }
     }
     \draw (40:2.125) node{$e$};
     \draw (0,-0.1) node[anchor=north]{$c$};
     \draw (60:3) node[anchor=south west]{$u$};
     \end{tikzpicture}
     
      \caption{
     Adding the dotted edge $e$ to the spider $S(3,3,3,3,3,3)$ 
     doubles $\thxa$ and $\thxx$ for $Y\in \{\pd,\Zp\}$. } 
     \label{f:uprodth-edgeops-LB}
 \end{figure}
       \end{ex}

Recall from Remark \ref{bds-u} that the lowest possible values of $\thpda(G)$ and $\thpdx(G)$  are one and  two, respectively  (provided $G$ has at least one edge). Furthermore, 
   $\thpda(G)=1$ if and only if $G$ has a universal vertex if and only if  $\thpdx(G)=2$. \cite{product2}.
\begin{ex}  Upper bounds in Proposition~\ref{uprodth-edgedel}\eqref{edge} edge deletion: 
To show that the upper bound is sharp for $\thpda$, let $H_1$ be the graph shown in  Figure \ref{f:uprodth-edgeops-UB}(a) including the dotted edge $e$.  Then $\thpda(H_1)=1$ by choosing $B=\{c\}$ as the   power dominating set, and $\thpda(H_1-e)\ge 2$ because $H_1-e$ does not have a universal vertex (in fact, $\thpda(H_1-e)= 2$).     

For $\thpdx$,  let $H_2$ be the graph shown in  Figure \ref{f:uprodth-edgeops-UB}(b) including the dotted edge $e$. Then $\thpdx(H_2)=3$ by choosing $B=\{c\}$ as the power dominating set ($H_2$ does not have a universal vertex), and $\thpdx(H_2-e)= 6$ because $\pd(H_2-e)=2$, $\ppt(H_2-e,2)=2$, and $\ppt(H_2-e,3)=1$.   
\begin{figure}[hbt!]
\centering
\begin{subfigure}[b]{0.225\textwidth}
    \centering
    \begin{tikzpicture}
    \draw(1,0)--({cos(360/6)},{sin(360/6)});
    \foreach \i in {1,2,...,5}{\draw(0,0)--({cos(\i*360/6)},{sin(\i*360/6)});};
    \draw[dotted,very thick](0,0)--(1,0);
    \foreach \i in {0,1,...,5}{\draw[fill=white]({cos(\i*360/6)},{sin(\i*360/6)})circle(0.1);};
    \draw[fill=white](0,0)circle(0.1);
    \node[below]at(0.55,0.05){$e$};
    \node[above left]at(-0.1,-0.1){$c$};
\end{tikzpicture}
\caption{$H_1$}
\end{subfigure}
\begin{subfigure}[b]{0.225\textwidth}
\centering
\begin{tikzpicture}
    \draw(-1,0)--(2,0);
    \draw(-1,-1)--(1,-1);
    \draw(0,0)--(0,-1);
    \draw[dotted,very thick](0,0)--(1,-1);
    \foreach \x in {-1,0,1}{
    \draw[fill=white] (\x,0) circle (0.1);
    \draw[fill=white] (\x,-1) circle (0.1);
    }
    \foreach\i in {60, 120}{
        \draw (0,0) -- (\i:1);
        \draw[fill=white](\i:1)circle(0.1);
    }
    \draw[fill=white] (0,0) circle (0.1);
    \draw[fill=white] (2,0) circle (0.1);
    \node[below left]at(0,0){$c$};
    \node[above right]at(0.5,-0.65){$e$};
\end{tikzpicture}
\caption{$H_2$}
\end{subfigure}
\begin{subfigure}[b]{0.45\textwidth}
\centering
\begin{tikzpicture}
     \draw (180:3) -- (0:3);
     \draw (-60:3) -- (120:3);
     \draw (-120:3) -- (60:1);
     \draw[dotted, very thick] (60:1) -- (60:2);
     \draw (60:2) -- (60:3);
     \foreach \d in {0,60,...,300}{
        \foreach \r in {0,...,3}{
            \draw[fill=white] (\d:\r) circle (0.1);
        }
     }
  \node[below left]at(1.2,1.45){$e$};
     \draw (0,-0.1) node[anchor=north]{$c$};
  \node[below left]at(2.1,2.7){$u$};
\end{tikzpicture}
\caption{$H_3 = S(3,3,3,3,3,3)$}
\end{subfigure}
\caption{Each graph $H_i$ includes the dotted edge $e$. Deleting $e$ doubles the listed parameter(s): (a)  $\thpda$;  (b)  $\thpdx$;  (c)   $\thpa$, $\thpx$. 
  }\label{f:uprodth-edgeops-UB} 
\end{figure}


The graph $H_3=S(3,3,3,3,3,3)$ shown in  Figure \ref{f:uprodth-edgeops-UB}(c) (including the dotted edge $e$) shows that  the upper bounds are  sharp for $\thpa$ and  $\thpx$: As shown  in Example \ref{ex:edge-lower}, $\thpa(H_3)= 3$  and $\thpx(H_{3})= 4$. 
For $H_3-e$, choosing  $B'=\{c,u\}$ where $u$ is  
the leaf vertex on the leg of $H_3$ that contains $e$ gives $\thpa(H_{3}-e)\le 6$ and $ \thpx(H_{3}-e)\le 8$.    
For $H_3-e$, propagation time 2 requires  at least six vertices and propagation time  1  requires at least  seven  vertices, 
so $\thpa(H_{3}-e)= 6$ and $ \thpx(H_{3}-e)= 8$. 
 \end{ex}

The example $S(3,3,3,3,3,3)$ also demonstrates sharpness for power domination (for both types of product throttling). However, when available, examples with all graphs connected are preferred for throttling, so we presented $H_1$ and $H_2$.  There is no example with $H-e$ connected that achieves  $\thpa(H-e)=2\thpa(H)$ or  $\thpx(H-e)=2\thpx(H)$, because as seen in Lemma \ref{l:EC}\eqref{EC2}, $\ptp(H-e, k+1)\le \ptp(H,k)$.  Doubling  
the product throttling number requires $k=1$, which means $\Zp(H)=1$. 
This implies $H$ is a tree \cite[Theorem 9.45]{HLSbook}, so deleting an edge disconnects $H$. 

\begin{figure}[hbt!]
    \centering
    \begin{tikzpicture}
    \coordinate (c) at (225:0.3);
    \coordinate (x) at (45:0.3);
    \foreach \t in {0,90,180,270} {
    \coordinate (a) at (\t:1);
    \coordinate (b) at (\t:2);
        \draw (a) -- (b);
        \draw (c) -- (a);
        \draw (x) -- (a);
        \draw[fill=white] (a) circle (0.1);
        \draw[fill=white] (b) circle (0.1);
    }
    \draw[fill=white] (c) circle (0.1) node[anchor=north east]{$c$};
    \draw[fill=white] (x) circle (0.1) node[anchor=south west]{$x$};
    \end{tikzpicture}
    \caption{ Graph $G$ is constructed from spider $S(2,2,2,2)$ by adding an independent twin $x$ of the center vertex $c$, which doubles $\thxa$ and $\thxx$ for $Y\in\{\pd,\Zp\}$.
     } 
    \label{f:thpdx-v}
\end{figure}
\begin{ex} For sharpness of the vertex deletion bound in  Proposition \ref{uprodth-edgedel}\eqref{vertex}, let $G-x=S(2,2,2,2)$ and let $x$ be an independent twin of the center vertex $c$ of $G-x$; see Figure \ref{f:thpdx-v}.  For $Y\in\{\pd,\Zp\}$, $\thxa(G)=4$ and  $\thxx(G)=6$ (using $B=\{c,x\}$), whereas $\thxa(G-x)=2$ and $\thxx(G-x)=3$   (using $B'=\{c\}$); in each case a minimum $Y$-set is used
 and the propagation time is 2.   Propagation time  1 
requires at least  four  vertices for $G$ and $G-x$.   

 \end{ex}


\begin{ex} For sharpness of the lower bounds in Proposition \ref{uprodth-edgedel}\eqref{uprodth-edgeops-contract-pd} regarding edge contraction for power domination and the first lower bound in Proposition \ref{uprodth-edgedel}\eqref{uprodth-edgeops-contract-Zp} regarding edge contraction for PSD forcing, 
let $G=K_2\circ 2K_1$ where $V(K_2)=\{c_1,c_2\}$ and  $e=c_1c_2$. Then $G/e\cong K_{1,4}$ with center   $y_e$, the contracted vertex (see Figure \ref{f:pd-econ}).     Thus $\thpda(G)=\thpa(G)=2$ and $\thpdx(G)=4$ (using $B=\{c_1,c_2\}$), while $\thpda(G/e)=\thpa(G/e)=1$ and $\thpdx(G/e)=2$ (using $B'=\{ y_e\}$). 
For power domination, the number of vertices  used is the power domination number and the propagation time is 1;  $\ptp(G,1)=2$. 
For sharpness of the second lower bound in \eqref{uprodth-edgeops-contract-Zp},  $ \frac{1}{2} \thpx(G) \leq \thpx(G/e)$, see Example \ref{ex:PSDinitcostedgecontract}.
 \end{ex}

\begin{figure}[hbt!]
     \centering
     \begin{subfigure}[b]{0.45\textwidth}
     \centering
     \begin{tikzpicture}
         \draw (-0.707,0.707) -- (0,0) -- (1,0) -- (1.707,0.707);
         \draw (-0.707,-0.707) -- (0,0);
         \draw (1,0) -- (1.707,-0.707);
         \draw[fill=white] (-0.707,0.707) circle (0.1);
         \draw[fill=white] (-0.707,-0.707) circle (0.1);
         \draw[fill=white] (0,0) circle (0.1);
         \draw[fill=white] (1,0) circle (0.1);
         \draw[fill=white] (1.707,0.707) circle (0.1);
         \draw[fill=white] (1.707,-0.707) circle (0.1);
         \draw (0.5,0) node[anchor=south]{$e$};
     \end{tikzpicture}
     \caption{}
     \label{f:d-star}
     \end{subfigure}
     \begin{subfigure}[b]{0.45\textwidth}
     \centering
     \begin{tikzpicture}
         \foreach \t in {45,135,225,315}{
            \draw (0,0) -- (\t:1);
            \draw[fill=white] (\t:1) circle (0.1);
         }
         \draw[fill=white] (0,0) circle (0.1);
         \draw (0,0.1) node[anchor=south]{$y_e$};
     \end{tikzpicture}
     \caption{}
     \label{f:d-star-ctr}
     \end{subfigure}
     \caption {(a) shows graph $G$ with edge $e$, and (b) shows $G/e$. Contracting $e$ halves $\thpda$, $\thpa$, and $\thpdx$. 
    }  
     \label{f:pd-econ}
 \end{figure}

\begin{ex} For sharpness of the upper bounds in Proposition \ref{uprodth-edgedel}\eqref{uprodth-edgeops-contract-pd} regarding edge contraction for power domination,   let $G$ be the graph shown in Figure \ref{f:thpdx-edgecon}(a); $G/e$ is shown in \ref{f:thpdx-edgecon}(b). Then $\thpda(G)=2$ and $\thpdx(G)=3$ (using $B=\{c\}$), while $\thpda(G/e)=4$ and $\thpdx(G/e)=6$ (using $B'=\{c,{y_e}\}$); in each case, the number of vertices  used is the power domination number and the propagation time is 2.   Propagation time  1  requires at least  five  vertices for $G$ and four for $G/e$.  
 \end{ex}

 \begin{figure}[hbt!]
     \centering
     \begin{subfigure}[b]{0.45\textwidth}
     \centering
     \begin{tikzpicture}
         \draw (0:1) -- (72:1);
         \draw (36:1.05) node{$e$};
         \foreach \t in {0,...,4}{
            \coordinate (a) at (360*\t/5:1);
            \coordinate (b) at (360*\t/5:2);
            \draw (0,0) -- (a) -- (b);
            \draw[fill=white] (a) circle (0.1);
            \draw[fill=white] (b) circle (0.1);
         }
         \draw[fill=white] (0,0) circle (0.1);
         \draw (180:0.35) node{$c$};
     \end{tikzpicture}
     \caption{}
     \label{f:thpdx-edgecon-a}
     \end{subfigure}
     \begin{subfigure}[b]{0.45\textwidth}
     \centering
     \begin{tikzpicture}
         \draw (0,0) -- (36:1);
         \draw (36:1) -- (54:2);
         \draw (36:1) -- (18:2);
         \draw[fill=white] (36:1) circle (0.1) node[anchor=north west]{$y_e$};
         \draw[fill=white] (54:2) circle (0.1);
         \draw[fill=white] (18:2) circle (0.1);
         \foreach \t in {2,3,4}{
            \coordinate (a) at (360*\t/5:1);
            \coordinate (b) at (360*\t/5:2);
            \draw (0,0) -- (a) -- (b);
            \draw[fill=white] (a) circle (0.1);
            \draw[fill=white] (b) circle (0.1);
         }
         \draw[fill=white] (0,0) circle (0.1);
         \draw (180:0.35) node{$c$};
     \end{tikzpicture}
     \caption{}
     \label{f:thpdx-edgecon-b}
     \end{subfigure}
     \caption{(a) shows graph $G$ with edge $e$, and (b) shows $G/e$. Contracting $e$ doubles $\thpda$ and $\thpdx$. 
    } 
     \label{f:thpdx-edgecon}
 \end{figure}

 \begin{ex}\label{ex:PSDinitcostedgecontract}
     We can modify the graph $G$ in Figure \ref{f:thpdx-edgecon}(a)  to obtain graph $H$ by adding a leaf to each of the three legs of order two. Then $H$  shows that the second lower bound in Proposition \ref{uprodth-edgedel}\eqref{uprodth-edgeops-contract-Zp} is sharp:  
     $\thpx(H)=8$ and $\thpx(H/e)=4$. In each case a minimum PSD forcing set is used and the propagation time is 3.  To achieve propagation time less than 3, five vertices are needed for $H$ and four vertices are needed for $H/e$. 
 \end{ex}

\begin{ex}
Paths can be used to establish sharpness for  all lower bounds in  Proposition \ref{uprodth-edgedel} \eqref{uprodth-edgeops-2} and \eqref{uprodth-edgeops-3} regarding  edge subdivision.  For $Y\in\{\pd,\Zp\}$, it is known  (see \cite{product2}) that $\thxa(P_n) = \lc \frac{n}{3} \rc$ and $\thxx(P_n) = 1 + \lc \frac{n-1}{2} \rc$.    Note that 
 $(P_n)_e = P_{n+1}$ for any edge $e$.  If $n \not\equiv 0 \pmod 3$, then
    $\thxa(P_n) = \lc \frac{n}{3} \rc = \lc \frac{n+1}{3} \rc = \thxa(P_{n+1}).$   If $n$ is even, then
    $\thxx(P_n) = 1 +  \lc \frac{n-1}{2} \rc = 1 +  \lc \frac{n}{2} \rc = \thxx(P_{n+1}).$
\end{ex}

\begin{ex}\label{ex:upper-subdiv}
The star $K_{1,n-1}$ with $n\ge 3$ establishes the sharpness of the upper bounds for no initial cost product throttling for $\pd$ and $\Zp$ in Proposition \ref{uprodth-edgedel}\eqref{uprodth-edgeops-2}  regarding 
edge subdivision:
 For $n\ge 3$,    $\thxa(K_{1,n-1}) = 1$ by using the center  vertex as the initial set. If any edge is subdivided, with an initial set of size $1$ the propagation time will be $2$, and the only way propagation time could remain $1$ is if the initial set gets an extra vertex. In either case, $\thxa$ is doubled from $1$ to $2$.

The star also establishes the sharpness of the upper bound for initial cost product throttling for  $\Zp$ {in Proposition \ref{uprodth-edgedel}\eqref{uprodth-edgeops-3}}: For $n\ge 3$,  $\thpx(K_{1,n-1}) = 2$ with the initial set $B$ being the center vertex and propagation time 1. With the same initial set, the propagation time after an edge is subdivided increases to 2,
    and this cannot be improved because an initial set with more vertices at least doubles the throttling number. Therefore,
    $\thpx((K_{1,n-1})_e) = 1(1 + \ptp((K_{1,n-1})_e, B)) = 1(1 + 2) = 3.$


Finally, to show that $\thpdx(G_e) = 2\thpdx(G)$ is possible, 
consider  the graph $G$ in Figure \ref{fig:sharpness-subdiv}. 
    The  set $\{c\}$  is 
    a  power dominating set with propagation time $2$, giving $\thpdx(G) \leq 1(1 + 2) = 3$.
 This cannot be improved because any initial set that achieves propagation time $1$ must have at least
    $3$ vertices, so $\thpdx(G) = 3$. If the edge $e $ shown in Figure \ref{fig:sharpness-subdiv} is subdivided,
    $\{c\}$ is no longer a power dominating set. Since $c$ must be in any initial power dominating set $B$ that does not include at least three leaves adjacent to $c$, we have $|B| \geq 2$. Denoting the new vertex by $ z_e$, we can see that $\{c,{z}_e\}$ is a power dominating set, and  $\thpdx(G_e) \le \thpdx(G_e, \{c,{z}_e\}) = 2(1 + 2) = 6$.
    In order to reduce the propagation time to $1$,  the initial set must contain 3 vertices,
    which still yields a throttling number of $3(1 + 1) = 6$. Therefore, $\thpdx(G_e) = 6$. 
\begin{figure}[H]
\centering
\begin{tikzpicture}
\draw({1+sqrt(3)/2},-0.5)--({sqrt(3)/2},-0.5);
\draw({1+sqrt(3)/2},0.5)--({sqrt(3)/2},0.5);
\draw({sqrt(3)/2},0.5)--(0,0)--({sqrt(3)/2},-0.5)--cycle;
\foreach\i in{1,2,3,4}{\draw(0,0)--({cos(90+\i*36)},{sin(90+\i*36)});}
\node[right]at({sqrt(3)/2},0){$e$};
\node[below]at(0,0){$c$};
\draw[fill=white]({1+sqrt(3)/2},-0.5)circle(0.1);
\draw[fill=white]({1+sqrt(3)/2},0.5)circle(0.1);
\draw[fill=white]({sqrt(3)/2},-0.5)circle(0.1);
\draw[fill=white]({sqrt(3)/2},0.5)circle(0.1);
\draw[fill=white](0,0)circle(0.1);
\foreach\i in{1,2,3,4}{\draw[fill=white]({cos(90+\i*36)},{sin(90+\i*36)})circle(0.1);}
\end{tikzpicture}
\caption{A graph $G$ and edge $e$ such that subdividing edge $e$ doubles  $\thpdx$. 
}
\label{fig:sharpness-subdiv}
\end{figure}
 \end{ex}

 There are good reasons that only one bound is presented for vertex deletion in Proposition \ref{uprodth-edgedel}\eqref{vertex}.  Deleting a universal vertex can change $\pd$ and $\Zp$ and thus the associated product throttling numbers substantially, as illustrated in the next example. 

\begin{ex} 
    Let $H$ be a connected graph of order $r$. Let $G=((H\circ K_1)\circ K_1)\vee K_1$ be the graph of order $n=4r+1$ obtained by first adding a leaf to every vertex of $H$ to get $H\circ K_1$, then adding a leaf to every vertex of $H\circ K_1$ to get $(H\circ K_1)\circ K_1$, and then adding a universal vertex $u$.  Then $\thpda(G)=1$ and  $\thpdx(G)=2$, whereas $\thpda(G-u)=\frac{n-1}2$ \cite{product1} and  $\thpdx(G-u)=\frac{3(n-1)}4$ \cite{product2}.  
   For PSD forcing (allowing a disconnected graph), consider the graph $H'$ created from a star with $n-1$ leaves by adding an edge between two leaves; 
   $c$ is the center vertex. Then $\thpa(H')=2$ and   $\thpx(H')=4$,   whereas $\thpa(H'-c)=n-2$ and $\thpx(H'-c)= n-1$. 
\end{ex}

 A similar situation can occur with edge contraction for PSD forcing: It is shown in \cite[Example 9.53]{HLSbook} that $\Zp(B_k)=2$ and $\Zp(B_k/e)=k+1$ where $B_k=K_{1,k}\cp P_2$ is the book graph of order $2k+2$  and $e$ is the spine of the book (that is, $e = uv$ where $\deg(u) = \deg(v) = k+1$).  Since $\ptp(B_k,2)=1$, $\thpa(B_k)=2$ and $\thpx(B_k)=4$.  On the other hand, $\thpa(B_k/e)= k+1$ (since $\ptp(B_k/e,k+1)=1$) and $\thpx(B_k/e)= 2k+1$ (since $\Zp(B_k/e)\ge \frac{|V(B_k/e)|}2$).\vspace{5pt}
 
Finally  we consider  product throttling for standard zero forcing, which behaves differently than other types of  throttling.  It is shown in \cite{product2} that  \begin{equation}\label{eq:no-thzx}
    \thzx(G) = |V(G)|,\end{equation} which
is achieved by filling all vertices. 
Define $k(G,p)=\min\{k:\ptz(G,k)=p\}$; $k(G,1)$ is of particular importance due to the next result.  
\begin{thm}\label{nicp-z}{\rm\cite{product2}}
For any graph $G$, $\thza(G)$ is the least $k$ such that
$\ptz(G, k) = 1$, i.e. $\thza(G) = k(G, 1)$. Necessarily,
$k(G,1) \geq \frac{n}{2}$.
\end{thm}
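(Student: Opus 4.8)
The plan is to establish the identity $\thza(G)=k(G,1)$ by proving the two inequalities $k(G,1)\le\thza(G)$ and $\thza(G)\le k(G,1)$ separately, and then to obtain $k(G,1)\ge n/2$ from the same counting idea. The one tool I need throughout is the standard forcing-chain decomposition of a standard zero forcing process: for any zero forcing set $B$, the process partitions $V(G)$ into exactly $|B|$ forcing chains, each of which is a path starting at a distinct vertex of $B$, and along any single chain the vertices are filled at strictly increasing time steps (a vertex can force its successor only after it has itself been filled). The immediate consequence I will use is that at each time step every chain contributes at most one newly filled vertex.

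For $k(G,1)\le\thza(G)$, I would take an arbitrary standard zero forcing set $B$ with $1\le |B|\le n-1$ and write $t=\ptz(G,B)$; since $B\ne V(G)$ we have $t\ge 1$. Put $B'=B^{[t-1]}$. One further time step fills all of $V(G)$, and $B'\ne V(G)$ because $t\ge 1$, so $\ptz(G,B')=1$. The key estimate is $|B'|\le t|B|$: along each of the $|B|$ chains the fill-times lie in $\{0,1,\dots,t\}$ and are distinct, so at most $t$ of a chain's vertices have fill-time in $\{0,1,\dots,t-1\}$; summing over the $|B|$ chains gives $|B^{[t-1]}|\le t|B|$. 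Therefore $k(G,1)\le |B'|\le t|B|=\thza(G,B)$, and minimizing over all $B$ that achieve a finite value of $\thza(G,B)$ gives $k(G,1)\le\thza(G)$.

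For the reverse inequality I first note that $k(G,1)$ is well defined and at most $n-1$: since $G$ has an edge, pick a non-isolated vertex $w$ and set $B=V(G)\setminus\{w\}$; every neighbor of $w$ then has $w$ as its unique unfilled neighbor, so $\ptz(G,B)=1$. Now let $B^*$ be a set of size $k(G,1)$ with $\ptz(G,B^*)=1$; because $1\le k(G,1)\le n-1$, this value of $k$ is admissible in the definition of $\thza(G)$, so $\thza(G)\le\thza(G,B^*)=k(G,1)\cdot 1=k(G,1)$. Combining the two inequalities yields $\thza(G)=k(G,1)$.

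Finally, for $k(G,1)\ge n/2$ I would apply a single-step count to $B^*$. Starting from $B^*$, distinct newly filled vertices must have distinct forcers in $B^*$ (a vertex with two unfilled neighbors forces nothing), so at most $|B^*|$ vertices are filled in one step; since $\ptz(G,B^*)=1$ fills all $n-|B^*|$ remaining vertices in that step, $n-k(G,1)\le k(G,1)$, i.e.\ $k(G,1)\ge n/2$. The main obstacle is the estimate $|B^{[t-1]}|\le t|B|$: the naive per-step bound from distinct forcers only yields $|B^{(i)}|\le|B^{[i-1]}|$, which accumulates and is too weak, so it is essential to use that each chain---not merely each currently filled vertex---contributes at most one vertex per time step.
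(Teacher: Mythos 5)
Your proof is correct, and it is essentially the standard argument: since this paper states the theorem only as a citation to \cite{product2} without reproducing a proof, the comparison is with the source, where the same route is taken. There, as in your write-up, one truncates an optimal process at time $t-1$ and uses the forcing-chain decomposition to get $|B^{[t-1]}|\le t|B|$ (your remark that the naive per-step bound $|B^{(i)}|\le|B^{[i-1]}|$ is too weak is exactly the right point), notes $B^{[t-1]}$ forces in one round so $k(G,1)\le\thza(G)$, gets the reverse inequality from $\thza(G)\le\thza(G,B^*)=k(G,1)$, and obtains $k(G,1)\ge\frac{n}{2}$ by the injective-forcer count in a single round.
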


Graphs $G$ attaining $\thza(G)= \frac n 2$ have been characterized and provide useful examples.
Let $G_1$ and $G_2$ be disjoint graphs of equal order and let $M$ be a matching between $V(G_1)$ and $V(G_2)$ that saturates all vertices. Then the  \emph{$M$-sum of $G_1$ and $G_2$}, denoted by $G_1M^+G_2$, is the graph with $V(G_1M^+G_2)=V(G_1)\cup V(G_2)$ and $E(G_1M^+G_2)=E(G_1)\cup E(G_2)\cup M$. A graph of the form $G_1M^+G_2$ is also called a \emph{matched-sum graph}.  Observe that the maximum number of edges in a matched-sum graph of order $n=2r$ 
is $r^2$, which is attained by $K_rM^+ K_r$. 

\begin{thm}\label{thza-halfeven}  {\rm \cite{product2}}
Let $G$ be a connected graph  of even order $n$.  Then $G$ satisfies $\thza(G)= \frac n 2$ if and only if $G$ is a connected matched-sum graph.
\end{thm}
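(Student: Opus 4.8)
The plan is to combine Theorem~\ref{nicp-z} with the rigidity of propagation time one. By Theorem~\ref{nicp-z}, $\thza(G)=k(G,1)$ and $k(G,1)\ge\frac n2$ always holds, so the condition $\thza(G)=\frac n2$ is equivalent to the existence of a standard zero forcing set $B$ with $|B|=\frac n2$ and $\ptz(G,B)=1$; here $1\le\frac n2\le n-1$ since $n\ge 2$. Thus the whole statement reduces to characterizing when there is a balanced bipartition $V(G)=B\du W$ (with $|B|=|W|=\frac n2$) for which every white vertex is forced in a single simultaneous step. The main work is to show that such one-step forcing from a half-sized set is possible precisely when $G$ arises as a matched-sum.

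For the easy direction, suppose $G=G_1M^+G_2$ and take $B=V(G_1)$, so $|B|=\frac n2$. The matching $M$ saturates every vertex, and by definition of the matched-sum the only edges joining $V(G_1)$ to $V(G_2)$ are the edges of $M$. Hence each $v\in B$ has exactly one neighbor in $W=V(G_2)$, namely its $M$-partner, so in the first time step every vertex of $B$ forces its partner and all of $W$ is filled. Therefore $\ptz(G,B)=1$ and $\thza(G)\le\frac n2$, which together with $k(G,1)\ge\frac n2$ gives equality.

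For the converse, fix $B$ with $|B|=|W|=\frac n2=:r$ and $\ptz(G,B)=1$. The key observation is that under the standard color change rule a filled vertex $v$ can force at most one white vertex in a given step (it forces only when it has a \emph{unique} white neighbor). Since all $r$ white vertices must be forced in the single available step, and each of the $r$ filled vertices contributes at most one force, a counting argument forces the forcing relation to be a bijection $\phi\colon B\to W$ in which each $v\in B$ has $\phi(v)$ as its \emph{only} white neighbor. Setting $M=\{v\phi(v):v\in B\}$ gives a perfect matching between $B$ and $W$, and the uniqueness clause shows there are no other edges between $B$ and $W$. Consequently $G=G[B]\,M^+\,G[W]$ is a matched-sum graph, and it is connected by hypothesis. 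I expect the counting step---arguing that one-step completion from a half-sized set forces the forcing map to be a bijection with no surplus cross-edges---to be the crux, though it is short once the ``at most one force per vertex'' principle is isolated.
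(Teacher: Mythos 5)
Your proof is correct and complete. Note that the paper itself does not prove this theorem---it is stated as a quoted result from \cite{product2}---and your argument (Theorem \ref{nicp-z} reduces the claim to the existence of a set $B$ with $|B|=\frac n2$ and $\ptz(G,B)=1$, after which the ``each filled vertex forces at most one white vertex per step'' count shows every $v\in B$ has exactly one neighbor in $W=V(G)\setminus B$ and these neighbors are distinct, so the cross-edges between $B$ and $W$ form precisely a perfect matching $M$ and $G=G[B]\,M^+\,G[W]$) is exactly the standard proof given in that cited source, with the converse direction's bijection count being the crux just as you identify.
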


As noted in \cite{product2}, the path $P_{2r}$ is a matched-sum graph, $\thza(P_{2r})=r$, and $\thza(P_{2r+1})=r+1$; recall that we denote the vertices of $P_n$ by $v_1,\dots,v_n$ in order.  When one vertex $u$ forces another vertex $v$, we sometimes denote this by $u\to v$.

 \begin{prop}\label{z-k-induced}   Let $G$ be a graph, let $x$ be a vertex of $G$,  let $e=uv$ be an edge. 
 \begin{enumerate}[$(1)$]
 \item Assuming  $G-{x}$ contains an edge, 
 $\thza(G)-1\le  \thza(G-{x})\le \thza(G)$.
    \item Assuming  $G-e$ contains an edge,   $\thza(G)-1\leq \thza(G-e) \leq \thza(G)+1$.
     \item Assuming  $G/e$ contains an edge, $\thza(G)-1\le  \thza(G/e)\le \thza(G)$.
     \item $\thza(G) \leq \thza(G_e) \leq \thza(G) + 1$. 
\end{enumerate}
These bounds are sharp.
  \end{prop}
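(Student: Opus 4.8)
The plan is to reduce every inequality to the characterization $\thza(G)=k(G,1)$ from Theorem~\ref{nicp-z}, so that $\thza(G)$ is the minimum size of a set $B$ of standard zero forcing propagation time exactly one. Such a $B$ is precisely a set for which every vertex of $V(G)\setminus B$ is the unique unfilled neighbor, relative to $B$, of some vertex of $B$. I will record this as a forcing assignment $\phi$ sending each $w\in V(G)\setminus B$ to a vertex $\phi(w)\in B$ whose only neighbor outside $B$ is $w$; since two distinct forced vertices cannot share a forcer, $\phi$ is injective (which also recovers $k(G,1)\ge n/2$). Because an optimal propagation-time-one set has size at most $|V|-1$, none of the enlarged sets produced below can exhaust the vertex set, so every ``$\ptz\le 1$'' conclusion is in fact ``$\ptz=1$'' and I will suppress this degenerate check.

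Six of the eight inequalities follow immediately by applying Lemma~\ref{l:EC} to an optimal propagation-time-one set and noting that the operation changes the set size by at most one while keeping the propagation time at most one. Concretely, the lower bounds in parts $(1)$, $(2)$, $(3)$ come from \eqref{EC1v}, \eqref{EC1}, and \eqref{EC1-contract}, applied to a minimum propagation-time-one set of $G-x$, $G-e$, and $G/e$ respectively, each of which produces a set of size one larger in $G$ with $\ptz\le 1$; the lower bound in $(4)$ comes from \eqref{EC1-subdiv}, which produces a set in $G$ of size no larger than the given one in $G_e$; and the upper bounds in $(2)$ and $(4)$ come from \eqref{EC2} and \eqref{EC2-subdiv}, each enlarging an optimal set of $G$ by at most one vertex in $G-e$ or $G_e$.

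The two inequalities with no matching part of Lemma~\ref{l:EC}, namely $\thza(G-x)\le\thza(G)$ in $(1)$ and $\thza(G/e)\le\thza(G)$ in $(3)$, I will prove by directly modifying an optimal propagation-time-one set $B$ of $G$ with forcing assignment $\phi$. For vertex deletion: if $x\notin B$, then $B$ itself already has propagation time at most one in $G-x$, since by uniqueness the forcer of any remaining $w$ was never adjacent to $x$; if $x\in B$, set $B'=B\setminus\{x\}$ and, in the case that $x=\phi(w_0)$ for some $w_0$, add $w_0$ back, so $|B'|\le|B|$ and every other forcer is unaffected. For edge contraction, let $\pi$ be the contraction map and take $\pi(B)$: when at most one of $u,v$ lies in $B$, the map $\pi$ is injective on $B$ and one checks that the merged vertex $y_e$ is either already in $\pi(B)$ or is forced by the former forcer of $u$ (which by uniqueness was nonadjacent to $v$), giving $\ptz\le 1$ with no increase in size. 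The one genuinely delicate case, and the main obstacle, is when both $u,v\in B$ and they force distinct vertices, say $u\to w_1$ and $v\to w_2$ with $w_1\neq w_2$: then $y_e$ inherits two unfilled neighbors and forces neither, so I add $w_1$ back to $\pi(B)$, after which $y_e\to w_2$ and the resulting set has size $|B|$, yielding $\thza(G/e)\le\thza(G)$.

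Finally, I will establish sharpness by explicit graphs, using the values $\thza(P_{2r})=r$ and $\thza(P_{2r+1})=r+1$ recorded after Theorem~\ref{thza-halfeven}. Deleting an endpoint of $P_{2r+1}$, or contracting any of its edges, yields $P_{2r}$ and realizes the lower bounds of $(1)$ and $(3)$; performing the same operations on $P_{2r}$ yields $P_{2r-1}$ and realizes the equalities $\thza(G-x)=\thza(G)$ and $\thza(G/e)=\thza(G)$; subdividing $P_{2r}$ to $P_{2r+1}$ and $P_{2r+1}$ to $P_{2r+2}$ realizes both bounds of $(4)$. For edge deletion, removing an end edge of $P_{2r}$ produces $K_1\cup P_{2r-1}$ with $\thza=r+1$, attaining the upper bound $\thza(G)+1$, while the triangle with a single pendant vertex (the paw), upon deleting a triangle edge at the vertex carrying the pendant, yields $P_4$ and attains the lower bound $\thza(G)-1$, here $3$ versus $2$.
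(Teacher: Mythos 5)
Your proposal is correct, and its core is the same as the paper's: reduce all eight inequalities to bounds on $k(\cdot,1)$ via Theorem~\ref{nicp-z}, transfer propagation-time-one sets across the operation with a size change of at most one, and handle the two directions with no off-the-shelf transfer --- $\thza(G-x)\le\thza(G)$ and $\thza(G/e)\le\thza(G)$ --- by the same case analyses the paper uses (your ``delicate case'' $u,v\in B$ with $u\to w_1$, $v\to w_2$ and the repair $B'=\pi(B)\cup\{w_1\}$, $y_e\to w_2$ is exactly the paper's). Where you genuinely diverge: (i) you invoke Lemma~\ref{l:EC} wholesale for six of the bounds, whereas the paper reproves those transfers inline for propagation-time-one sets and obtains the subdivision bounds by viewing $G$ as $G_e/f$ with $f=uz_e$; your packaging is cleaner and legitimate since Lemma~\ref{l:EC} is stated for $Y=\Z$ (and you correctly avoid \eqref{EC2-contract}, which is $\pd$-only). (ii) Your sharpness witnesses for edge deletion differ: the paper uses connected matched-sum graphs, taking $G-e=K_rM^+K_r$ (so that $G$ exceeds the maximal edge count $r^2$ and Theorem~\ref{thza-halfeven} forces $\thza(G)\ge r+1$) and deleting a matching edge of $K_rM^+K_r$ for the upper bound, while you use the paw versus $P_4$ for the lower bound and $P_{2r}$ minus an end edge, giving $K_1\cup P_{2r-1}$ with $\thza=r+1$, for the upper bound; both of yours check out (the paw has $k(\cdot,1)=3$ since no $2$-set forces everything in one round, and the isolated vertex must join any forcing set), and disconnectedness is permitted since the hypothesis only requires $G-e$ to contain an edge --- though the paper deliberately prefers connected examples. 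Two small hygiene points, neither a real gap: your blanket claim that no enlarged set can exhaust the vertex set is not literally guaranteed (e.g., if $k(G-e,1)=n-1$), but in every such degenerate case the desired inequality holds trivially because $k(H,1)\le |V(H)|-1$ whenever $H$ has an edge; and in the contraction argument you leave implicit the subcase where both $u,v\in B$ but only one of them forces, which needs the same repair (add the forced vertex back) rather than the ``easy'' treatment, since $y_e$ may inherit two or more unfilled neighbors from the non-forcing endpoint.
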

  \bpf We establish the corresponding bounds for $k(G,1)$ and then apply Theorem \ref{nicp-z}.   We use matched-sum graphs in examples showing sharpness.  Let $y_e$ denote the vertex created by contracting edge $e$ and $z_e$ denote the vertex created by subdividing edge $e$.
  
  $k(G,1)\le  k(G-{x},1)+1$: Choose $B'$ such that $\ptz(G-{x},B')=1$ and $|B'|=k(G-x,1)$. Let $B=B'\cup \{x\}$.  Then the same propagation process can force in one round.   
  For  sharpness, consider $P_{2r+1}$ and let $x=v_{2r+1}$: $P_{2r+1}-{x}\cong P_{2r}$ and $\thza(P_{2r+1}-{x})=r=\thza(P_{2r+1})-1$.    

    $k(G-{x},1)\le k(G,1)$: Choose $B$ such that $\ptz(G,B)=1$ and $|B|=k(G,1)$.  If ${x}\in B$ and $x\to w$ when using $B$ to force all vertices in one round, then let $B'=B\setminus\{{x}\}\cup\{{w}\}$. If ${x}\in B$ and ${x}$ does not force when using $B$ to force all vertices in one round, then let $B'=B\setminus\{{x}\}$.  Otherwise let $B'=B$ and let ${w}$ be the vertex that forces ${x}$.   In all cases, the same propagation process can force in one round (deleting  $x\to w$  or  $w\to x$ if needed), and $ |B'|\le |B|$.  
    For sharpness, consider $P_{2r}$ and let $x=v_{2r}$:  $P_{2r}-{x}\cong P_{2r-1}$ and $\thza(P_{2r}-{x})=r=\thza(P_{2r})$.

  $k(G,1)\le  k(G-e,1)+1$: Choose $B'$ such that $\ptz(G-e,B')=1$ and $|B'|=k(G-e,1)$. 
  If exactly one of $u,{v}$ performs a force  when using $B'$ to force all vertices in one round, then assume (without loss of generality) that $u$ performs the force and  let $B=B'\cup\{{v}\}$.  When both force or neither force, let $B=B'$. 
   Then the same propagation process can force in one round and $ |B|\le |B'|+1$.   
   For sharpness, consider the matched-sum graph $G-e=K_rM^+ K_r$, which has order $2r$.  Then $\thza(G-e)=r$ and $G-e$ has $r^2$ edges.  Adding any edge (necessarily between the two copies of $K_r$) results in a graph with $r^2+1$ edges, so $\thza(G)\ge r+1$.

$k(G-e,1)\le  k(G,1)+1$: Choose $B$ such that $\ptz(G,B)=1$ and $|B|=k(G,1)$.   If one of $u,{v}$  forces the other  when using $B$ to force all vertices in one round, then assume (without loss of generality) that $u\to {v}$  and  let $B'=B\cup\{{v}\}$. 
Otherwise let $B'=B$.  Then the same propagation process can force in one round (deleting $u\to {v}$ if needed). 
    Thus $k(G,1)\le  k(G-e,1)+1$.  
 For sharpness, consider  $K_rM^+ K_r$ with $r\ge 3$ and note that $\thza(K_rM^+ K_r)=r$. Let $e$ be an edge in the matching between the two copies of $K_r$.  Then  $\thza(K_rM^+ K_r-e)=r+1$ because no set of $r$ vertices can force in one round. 

 
 $k(G,1)\le k(G/e,1)+1$: Choose $B'$ such that $\ptz(G/e,B')=1$ and $|B'|=k(G/e,1)$.  If ${y_e}\in B'$, then $B=B'\setminus \{{y_e}\}\cup\{u,v\}$.  If ${y_e}\not\in B'$, then there is a vertex $x$ such that $x\to {y_e}$; without loss of generality,  $x$ is adjacent to $u$ in $G$, and let {$B=B'\cup\{v\}$}.  
For sharpness, consider $K_{1,n-1}, n\ge { 3}$ 
and $e$ any edge, so $(K_{1,n-1})/e\cong K_{1,n-2}$.  Thus $\thza((K_{1,n-1})/e)=n-2=\thza(K_{1,n-1})-1$.
 
  $k(G/e,1)\le  k(G,1)$:  Choose $B$ such that $\ptz(G,B)=1$ and $|B|=k(G,1)$. If $u,{v}\in B$ and at least one of $u,v$ forces a vertex, say $u\to w$, then $B'=B\setminus\{u,{v}\}\cup\{{y_e,w}\}$ {(if $u\to w$ and $v\to w'$ in $G$, then $   y_e\to w'$ in the first round works for $G/e$)}.
  If $u,{v}\in B$ and neither $u$ or ${v}$ forces, then $B'=B\setminus\{u,{v}\}\cup\{{y_e}\}$. 
  If $u\in B$ and ${v}\not\in B$, then $u$ does not force or $u\to {v}$, so $B'=B\setminus\{u\}\cup\{{y_e}\}$.  If $u,{v}\not\in B$, then there are vertices $a,b$ such that $a\to u$ and $b \to v$, so let $B'=B$ and $a\to  y_e$ in the first round works.
For sharpness, consider $P_{2r}$ and $e=v_{2r-1}v_{2r}$: $(P_{2r})/e\cong P_{2r-1}$, so $\thza((P_{2r})/e)=r=\thza(P_{2r})$.

 $k(G_e,1)\le  k(G,1)+1$ and $k(G,1)\le k(G_e,1)$ follow from the contraction bounds since setting $H=G_e$ with new edges $f=u{z_e}$ and $vz_e$ and contracting $f$ gives $G\cong H/f$.  Paths can be used for sharpness examples.
  \epf

\section*{Acknowledgements}
This research began at the American Institute of Mathematics with support from NSF DMS grant  2015462.  The authors thank AIM and NSF. Ryan Blair was supported in part by NSF grant DMS-2424734. Veronika Furst was supported in part by NSF grant DMS-2331072.


\end{document}